\theoremstyle{plain}
\newtheorem{theorem}{Theorem}[section]
\newtheorem{lemma}[theorem]{Lemma}
\newtheorem{proposition}[theorem]{Proposition}
\theoremstyle{definition}
\newtheorem{definition}[theorem]{Definition}
\begin{document}

\title[]{Blow up property for viscoelastic
evolution equations on manifolds with conical degeneration}

\centerline{}
\author[M. Alimohammady]{Mohsen Alimohammady}
\address{Department of Mathematics, Faculty of
Mathematical Sciences University of Mazandaran, Babolsar 47416-1468,
Iran} \email{\href{mailto: <amohsen@umz.ac.ir>}{amohsen@umz.ac.ir}}
\author[M. K. kalleji]{Morteza  Koozehgar kalleji}
\address{Department of Mathematics, Faculty of
Mathematical Sciences University of Mazandaran, Babolsar 47416-1468,
Iran}
\email{\href{mailto:<m.kalleji@yahoo.com>}{m.kalleji@yahoo.com}}

\keywords{ Viscoelastic equation, blow up, Cone Sobolev spaces,
degenerated 
 differential Operator.\\
$^\ast$ Corresponding author \\
\hspace*{.3cm} {\it 2010 Mathematics Subject Classifications}:
35L71,  74DXX, 35B44, 58JXX ·}

\date{\bf }

\begin{abstract}
This paper is concerned with the study of the nonlinear viscoelastic
evolution equation with strong damping and source terms, described
by \[u_{tt} - \Delta_{\mathbb{B}}u +
\int_{0}^{t}g(t-\tau)\Delta_{\mathbb{B}}u(\tau)d\tau +
f(x)u_{t}|u_{t}|^{m-2} = h(x)|u|^{p-2}u , \hspace{1 cm} x\in
int\mathbb{B}, t
>
0,\] where $\mathbb{B}$ is a stretched manifold. First, we prove the
solutions of problem {1.1} in cone Sobolev space
$\mathcal{H}^{1,\frac{n}{2}}_{2,0}(\mathbb{B}),$ admit a blow up in
finite time for $p > m$ and positive initial energy. Then, we
construct a lower bound for obtained blow up time under appropriate
assumptions on data.

\end{abstract}

\maketitle

\numberwithin{equation}{section}
\section{Introduction}

Nonlinear differential equations and their solutions play an
important role at description of physical and other processes. For
instance, a purely elastic material has a capacity to store
mechanical energy with no dissipation of the energy. A complete
opposite to an elastic material is a purely viscous material. The
important point about viscous materials is that when the force is
removed it does not return to its original shape. Materials which
are outside the scope of these two models will be those for which
some of the work done to deform them can be recovered. Such
materials possess a capacity of storage and dissipation of
mechanical energy. This is the case for viscoelastic material. It is
well known that viscoelastic materials exhibit natural damping,
which is due to the special property of these materials to retain a
memory of their past history \cite{FM,RHN}. It is worth mentioning
the works in connection with viscoelastic effects on a domain
$\Omega$ of $\mathbb{R}^{n}.$ Cavalcanti et al. \cite{Cav1}, firstly
investigated
\begin{equation}
\left\{\begin{array}{ll} u_{tt} - \Delta u +
\int_{0}^{t}g(t-\tau)\Delta u(\tau)d\tau + a(x)u_{t} + |u|^{\gamma}u
=0 , \hspace{1 cm} x\in \Omega,~~ t
>
0,\\
u(0,x) = u_{0}(x),~~~~~~u_{t}(0,x) = u_{1}(x) , \hspace{4.2 cm} x\in \Omega\\
u(t,x) = 0 ,\hspace{7.9 cm}x\in
\partial\Omega,~~t\geq 0,
\end{array}\right.
\end{equation}
and obtained an exponential decay rate of the solution under some
assumptions on $g(s)$ and $a(x).$ It is important to mention some
papers in connection with viscoelastic effects, among them, Alves
and Cavalcanti \cite{AlC}, Aassila et.al \cite{ACS} and references
therein. Cavalcanti and Oquendo \cite{CO} studied \[u_{tt} -
k_{0}\Delta u + \int_{0}^{t} div [a(x) g(t-\tau)\nabla u ]d\tau +
b(x) h(u_{t})  + f(u) = 0,\] under the restrictive assumptions on
both the damping function $h$ and the kernel $g.$ Moreover,
Messaoudi \cite{Mes1} studied the global existence of solutions for
the viscoelastic equation, at the same time he also obtained a
blow-up result with negative energy. Then, he improved his blow-up
result in \cite{Mes2}. Recently, Song and Xue \cite{SX}, considered
the nonlinear viscoelastic equation \[u_{tt} - \Delta u +
\int_{0}^{t} g(t - \tau) \Delta u(\tau)d\tau - \Delta u_{t} =
|u|^{p-2}u \hspace{1 cm}  on ~~\Omega\times [0,T],\] where $\Omega$
is a bounded domain of $\mathbb{R}^{n}$ with a smooth boundary
$\partial\Omega.$ They showed under suitable conditions on $g,$ that
there are solutions to their problem with arbitrarily high initial
energy which blow up in finite time. Cavalcanti et al. \cite{Cav2}
considered a nonlinear viscoelastic evolution equation as \[u_{tt} +
Au + F(x,t,u,u_{t}) - \int_{0}^{t}g(t - \tau)Au(\tau)d\tau =0
\hspace{1 cm} on~~\Gamma\times(0,\infty)\] where $\Gamma$ is a
compact manifold. When $F\neq 0$ and $g\neq 0$ they proved the
existence of global solutions as well as uniform decay rates.
Furthermore, Cavalcanti et al. \cite{Cav3}, studied the wave
equation on compact surfaces and locally distributed damping,
described by \[u_{tt} - \Delta_{\mathcal{M}}u + a(x) g(u_{t}) = 0
\hspace{1 cm} on ~\mathcal{M}\times (0,\infty),\] where $\mathcal{M}
\subset \mathbb{R}^{3}$ is a smooth oriented embedding compact
surface without boundary and $\Delta_{\mathcal{M}}$ is the
Laplace-Beltrami operator on $\mathcal{M}.$ Recently, the authors in
\cite{Cav4}, discussed the asymptotic stability of the wave equation
on a compact Riemannian manifold subject to locally distributed
viscoelastic effects. Also, they proved that the solutions of the
corresponding partial viscoelastic model decay exponentially to
zero. This paper motivated by two works. The first, analog to
\cite{Cav3}, we consider Fuchsian-Laplace operator
$\Delta_{\mathbb{B}}$ as a special case of the typical differential
operators on a manifold with conical singularity so-called Fuchs
type operator. The second, using of the nonlinear viscoelastic
evolution equation in \cite{Cav2}, we will investigate a nonlinear
viscoelastic evolution problem includes Fuchsian-Laplace operator,
strong damping and source terms. More precisely, we want to study
blow up of solutions a nonlinear viscoelastic problem contains
Fuchsian-Laplace operator and nonlinear damping and source terms on
a manifold with conical singularity points. Moreover, when blow up
occurs, the blow up time $T$ cannot usually be computed exactly. We
note that, in general, this is very difficult to obtain a lower
bound for viscoelastic wave equation. Hence, it is very attractive
and interesting subject of inquiry to specify lower  and upper
bounds for blow up time $T.$ From the mathematical point of view,
the damping effects are modeled by intro-differential operators.
Therefore, the dynamics of viscoelastic materials are of great
importance and interest as they have wide applications in natural
sciences. Thereupon, we consider an initial-boundary value problem
for a nonlinear viscoelastic wave equation with strong damping,
nonlinear damping and source terms on a manifold with conical
degenerations as follows:
\begin{equation}\label{1.1}
\left\{\begin{array}{ll} u_{tt} - \Delta_{\mathbb{B}}u +
\int_{0}^{t}g(t-\tau)\Delta_{\mathbb{B}}u(\tau)d\tau +
f(x)u_{t}|u_{t}|^{m-2} = h(x)|u|^{p-2}u , \hspace{1 cm} x\in
int\mathbb{B},~ t
>
0,\\
u(x,0) = u_{0}(x),~~~~~~u_{t}(x,0) = u_{1}(x) , \hspace{6.5 cm} x\in int\mathbb{B}\\
u(t,x) = 0 ,\hspace{10 cm}x\in
\partial\mathbb{B},~t\geq 0,
\end{array}\right.
\end{equation}
where $p>2,$ $m \geq 2 ,$ and $g$ is a $C^{1}-$function and we
assume that the function $g$ satisfies the following conditions:

$A_{1})$ $1 - \int_{0}^{\infty} g(s)ds = l > 0.$ This assumption
guarantee the hyperbolicity and well-posedness of the system
\ref{1.1}.

$A_{2})$ $g(s) \geq 0$ and $g'(s) \leq 0 ;$

$A_{3})$ $\int_{0}^{\infty} g(s) ds < 1 - \frac{1}{(p - 1)^{2}} ;$

$A_{4})$ $f,h\in L^{\infty}(int\mathbb{B})\cap C(int\mathbb{B})$ are
positive functions.

 The domain
$\mathbb{B}$ equals $[0,1) \times X,$ $X$ is an $(n -
1)$-dimensional closed compact manifold $(n \geq 3),$ which is
regarded as the local model near the conical points on manifolds
with conical singularities, and $\partial\mathbb{B}= \{0\} \times
X.$ Also, we assume that the volume of $\mathbb{B}$ is finite, i.e.
$|\mathbb{B}| = \int_{\mathbb{B}} \frac{dx_{1}}{x_{1}}dx' < +
\infty.$ Moreover, the operator $\Delta_{\mathbb{B}}$ in $\ref{1.1}$
is defined by $(x_{1}\partial_{x_{1}})^{2} +
\partial^{2}_{x_{2}}+ ...+
\partial^{2}_{x_{n}},$ which is an elliptic operator with totally
characteristic degeneracy on the boundary $x_{1} = 0,$ we also call
it Fuchsian type Laplace operator, and the corresponding gradient
operator by $\nabla_{\mathbb{B}}:= (x_{1}\partial_{x_{1}},
\partial_{x_{2}},..., \partial_{x_{n}}).$ Near $\partial\mathbb{B}$
we will often use coordinates $(x_{1} , x')= (x_{1},
x_{2},...,x_{n})$ for $x_{1}\in [0,1)$ and $x' \in X.$ To more
details about the manifold with singularities and elliptic
differetial operators with totally characteristic degeneration see
Section 2 and the references \cite{CLW1,S}.

In the absence of the viscoelastic term ($g = 0$,) the problem
\ref{1.1} has been extensively studied and results concerning
existence and nonexistence have been established in Euclidean
domains. In bounded domains, for the equation
\[u_{tt} - \Delta u +  a|u_{t}|^{m-2}u_{t} = b|u|^{p-2}u \hspace{1 cm} on~~\Omega \times (0,\infty),\]
 where $m \geq 2,$ $ p > 2$ and $a,b$ are positive constants, the
 interaction between the damping and source terms was first
 considered by Levine \cite{Le1,Le2} for the linear damping case $m =2$
 and also the author proved that solutions with negative initial
 energy blow up in finite time. Georgiev and Todorova
 \cite{GT} investigated Levine's results for the nonlinear damping
 case $m > 2$ and they proved that solutions with any initial data
 continue to exist globally if $ m \geq p$ and blow up in finite
 time if $ p > m$ and the initial energy is sufficiently negative.
 For the problem \ref{1.1} without viscoelastic and damping terms on a manifold  with conical
 singularity, the authors \cite{AK} used the cone Sobolev and
 Poincar$\acute{e}$
 inequalities to prove the existence results of global solutions
 with exponential decay and showed the blow up in finite time of
 solutions in cone Sobolev space $\mathcal{H}^{1,\frac{n}{2}}_{2,0}(\mathbb{B}).$

Suppose that $C_{emb}$ is the best constant of the Sobolev embedding
$\mathcal{H}^{1,\frac{n}{2}}_{2,0}\hookrightarrow L^{\frac{n}{p}}$
for $p\in [2, 2^{*}]$ and
\begin{eqnarray}\label{1.3}C_{*}:=
\frac{C_{emb}C_{Poin}}{l^{\frac{1}{2}}}, \end{eqnarray} where
$C_{Poin}$ is the Poincar\'{e} constant. Furthermore, we assume that
\begin{eqnarray}\label{1}
C^{*} = \inf\Biggl\{\frac{\int_{\mathbb{B}}f(x)|u_{t}|^{m}
\frac{dx_{1}}{x_{1}}dx'}{\|\nabla_{\mathbb{B}}u\|^{m}_{L^{\frac{n}{m}}(\mathbb{B})}}~~~~~~~~~~:~~~~~~~~~u\in
\mathcal{H}^{1,\frac{n}{2}}_{2,0}(\mathbb{B})\Biggr\},
\end{eqnarray} and
\begin{eqnarray}\label{2}
\max ~~\{~m,p~ \} \leq \frac{2(n - 1)}{n - 2} \hspace{1 cm}\forall n
\geq 3.
\end{eqnarray}

Now, we state our main results as follows. In the first result, we
conclude blow up for solutions of problem \ref{1.1} and in the
second result, we present a lower bound for provided blow up time
$T$ under suitable conditions on components of our problem.

\begin{theorem}\label{th.1}
Assume that the conditions $A_{1}-A_{4}$ hold. Let $p$ and $m$ be
such that $m \geq 2$ and $p > \max\{2,m\}.$ Then, any weak solution
of \ref{1.1} with initial data satisfying the following conditions
blows up in finite time.
\[I(0) < I_{1} , \hspace{1 cm}
\|\nabla_{\mathbb{B}}u_{0}\|_{L^{\frac{n}{2}}(\mathbb{B})} >
(C_{*}~\sqrt[p]{C_{h}})^{-\frac{p}{p-2}} .\]
\end{theorem}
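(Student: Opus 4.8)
The plan is to run a potential-well argument adapted to the cone Sobolev setting, combined with a concavity (Lyapunov) estimate. First I would introduce the modified energy
\[E(t)=\tfrac12\|u_t\|_{L^{\frac{n}{2}}(\mathbb{B})}^2+\tfrac12\Bigl(1-\int_0^t g(s)\,ds\Bigr)\|\nabla_{\mathbb{B}}u\|_{L^{\frac{n}{2}}(\mathbb{B})}^2+\tfrac12(g\circ\nabla_{\mathbb{B}}u)(t)-\tfrac1p\int_{\mathbb{B}}h(x)|u|^p\,\tfrac{dx_1}{x_1}dx',\]
where $(g\circ v)(t)=\int_0^t g(t-\tau)\|v(t)-v(\tau)\|_{L^{\frac{n}{2}}(\mathbb{B})}^2\,d\tau$ is the usual viscoelastic term, together with the associated Nehari functional $I(t)$ and potential $J(t)$ appearing in the statement. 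Pairing equation \ref{1.1} with $u_t$ in the weighted inner product of $L^{\frac{n}{2}}(\mathbb{B})$ and integrating by parts with the degenerate gradient $\nabla_{\mathbb{B}}$, I would derive the energy identity $E'(t)=-\int_{\mathbb{B}}f(x)|u_t|^m\,\tfrac{dx_1}{x_1}dx'-\tfrac12 g(t)\|\nabla_{\mathbb{B}}u\|_{L^{\frac{n}{2}}(\mathbb{B})}^2+\tfrac12(g'\circ\nabla_{\mathbb{B}}u)(t)$. By $A_2$ and the positivity of $f$ in $A_4$, every term on the right is nonpositive, so $E(t)\le E(0)$ throughout the existence interval.

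Second, I would prove that the two initial conditions $I(0)<I_1$ and $\|\nabla_{\mathbb{B}}u_0\|_{L^{\frac{n}{2}}(\mathbb{B})}>(C_*\sqrt[p]{C_h})^{-\frac{p}{p-2}}$ define an invariant region for the flow. The argument is by contradiction on the first time $t_0$ at which the gradient norm would return to the threshold: using $E(t)\le E(0)$, the cone Sobolev embedding $\mathcal{H}^{1,\frac n2}_{2,0}\hookrightarrow L^{\frac np}$ with constant $C_{emb}$, and the Poincar\'e inequality with constant $C_{Poin}$ (so that the composite constant $C_*$ from \ref{1.3} appears), one shows that $\|\nabla_{\mathbb{B}}u(t_0)\|$ would in fact have to exceed the threshold strictly, a contradiction. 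Assumption $A_1$ guarantees the coefficient $1-\int_0^t g\ge l>0$, keeping $I$ coercive; this is what allows the unstable set to persist and forces $I(t)<I_1$ for all $t$ in the interval of existence.

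Third, on this invariant set I would set up the concavity mechanism. Introduce the positive gap functional $H(t)=I_1-I(t)\ge 0$ and the Lyapunov functional
\[L(t)=H^{1-\sigma}(t)+\varepsilon\int_{\mathbb{B}}u\,u_t\,\tfrac{dx_1}{x_1}dx',\]
for small parameters $\sigma,\varepsilon>0$. Differentiating $L$ and using the equation, the energy identity, and the sign of $H'$, the source term $\int_{\mathbb{B}}h|u|^p$ supplies a superlinear gain, while Young's inequality absorbs the damping contribution $\int_{\mathbb{B}}f|u_t|^m$ precisely because $p>m$. Choosing $\sigma$ sufficiently small I would obtain a differential inequality of the form $L'(t)\ge C\,L(t)^{\frac{1}{1-\sigma}}$ with $\tfrac{1}{1-\sigma}>1$; integrating this forces $L$, and hence an appropriate norm of $u$, to become unbounded at some finite time $T$, which is the claimed blow-up.

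The main obstacle I anticipate is the control of the memory (convolution) term. When differentiating $L$ and when estimating $I(t)$, the cross term $\int_0^t g(t-\tau)\langle\nabla_{\mathbb{B}}u(t),\nabla_{\mathbb{B}}u(\tau)\rangle\,d\tau$ must be split into a coercive multiple of $\|\nabla_{\mathbb{B}}u\|^2$ plus a controlled multiple of $(g\circ\nabla_{\mathbb{B}}u)(t)$ without destroying the favorable sign needed to close the inequality; this is exactly where the sharp smallness condition $A_3$, namely $\int_0^\infty g<1-\frac{1}{(p-1)^2}$, enters quantitatively. A secondary technical point is that every functional inequality (Poincar\'e, Sobolev embedding, interpolation) must be taken in its cone Sobolev form on $\mathbb{B}$ with the degenerate operator $\nabla_{\mathbb{B}}$ and the weighted measure $\frac{dx_1}{x_1}dx'$, so the constants $C_{emb}$, $C_{Poin}$, $C_*$ and $C_h$ have to be tracked carefully to confirm that the threshold in the hypothesis is precisely the critical value separating global existence from blow-up.
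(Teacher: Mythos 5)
Your proposal is correct and follows essentially the same route as the paper: the energy decay identity (the paper's Lemma \ref{l.1}), the invariant-set/potential-well argument by contradiction at a first exit time (Lemma \ref{l.2}), and then the Lyapunov functional $\mathcal{F}(t)=\mathcal{I}^{1-k}(t)+\epsilon\int_{\mathbb{B}}u\,u_{t}\,\frac{dx_{1}}{x_{1}}dx'$ with $\mathcal{I}(t)=I_{1}-I(t)$, driven to the differential inequality $\mathcal{F}'(t)\geq\Omega\,\mathcal{F}^{\frac{1}{1-k}}(t)$ whose integration gives finite-time blow-up. Your identification of where $p>m$ (Young's inequality absorbing the damping term, with the weight $\theta$ chosen depending on $\mathcal{I}^{-k}(t)$) and where $A_{3}$ enters (keeping the coefficient of $\|\nabla_{\mathbb{B}}u\|^{2}_{L^{\frac{n}{2}}(\mathbb{B})}$ positive after splitting the memory term) matches the paper's computation.
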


\begin{theorem}\label{th.2}
Suppose that the assumptions in Theorem \ref{th.1} hold. Further, we
deem that $u(x,t)$ is a weak solution of \ref{1.1}, which blows up
at a finite time $T.$ Then, $T$ admits a lower bound as follows
\[\int_{\mathcal{H}(0)}^{\infty} \frac{1}{\mathcal{C}_{3}S^{k} + S +
\mathcal{C}_{4}}ds \leq T.\]
\end{theorem}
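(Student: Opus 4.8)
The plan is to use the classical differential-inequality method for bounding the blow-up time from below. First I would introduce an auxiliary functional measuring the size of the solution, for instance
\[
\mathcal{H}(t) := \int_{\mathbb{B}} \bigl(|u_{t}(x,t)|^{2} + |\nabla_{\mathbb{B}}u(x,t)|^{2}\bigr)\,\frac{dx_{1}}{x_{1}}dx',
\]
possibly augmented by a lower-order $L^{2}$ term and by the usual memory contribution $\int_{0}^{t}g(t-\tau)\|\nabla_{\mathbb{B}}u(\tau)\|^{2}_{L^{\frac{n}{2}}(\mathbb{B})}\,d\tau$ so that the differentiation below is clean. The functional is chosen so that $\mathcal{H}(t)\to\infty$ as $t\to T^{-}$ whenever the weak solution blows up at the finite time $T$; the objective is then to produce a first-order differential inequality of the shape $\mathcal{H}'(t)\le \mathcal{C}_{3}\,\mathcal{H}(t)^{k}+\mathcal{H}(t)+\mathcal{C}_{4}$ for constants $\mathcal{C}_{3},\mathcal{C}_{4}>0$ and an exponent $k>1$ dictated by $p$.

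Second, I would differentiate $\mathcal{H}$ and substitute $u_{tt}$ from \ref{1.1}, i.e.\ test the equation against $u_{t}$ and integrate by parts over $\mathbb{B}$ with respect to the conical measure $\frac{dx_{1}}{x_{1}}dx'$. Since $\Delta_{\mathbb{B}}$ is the Fuchsian Laplacian associated with the gradient $\nabla_{\mathbb{B}}$, the principal term $\int_{\mathbb{B}}u_{t}\Delta_{\mathbb{B}}u\,\frac{dx_{1}}{x_{1}}dx'$ integrates by parts to a gradient expression that cancels the contribution of $\frac{d}{dt}\|\nabla_{\mathbb{B}}u\|^{2}_{L^{\frac{n}{2}}(\mathbb{B})}$ appearing in $\mathcal{H}'$, while the damping term contributes $-\int_{\mathbb{B}}f(x)|u_{t}|^{m}\,\frac{dx_{1}}{x_{1}}dx'\le 0$ by $A_{4}$ and is discarded since it only helps. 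This reduces the problem to estimating the source term and the nonlocal memory term.

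Third, I would estimate the two remaining contributions. The source term $\int_{\mathbb{B}}h(x)|u|^{p-2}u\,u_{t}\,\frac{dx_{1}}{x_{1}}dx'$ is controlled, using $A_{4}$, H\"older's inequality, the Cone Sobolev embedding $\mathcal{H}^{1,\frac{n}{2}}_{2,0}\hookrightarrow L^{\frac{n}{p}}$ and the Poincar\'e inequality (hence the constant $C_{*}$ of \ref{1.3}), together with the exponent restriction \ref{2}, by a multiple of $\mathcal{H}^{k}$ with $k>1$ determined by $p$. For the memory term $-\int_{\mathbb{B}}u_{t}\int_{0}^{t}g(t-\tau)\Delta_{\mathbb{B}}u(\tau)\,d\tau\,\frac{dx_{1}}{x_{1}}dx'$, I would integrate by parts in space and invoke $A_{1}$--$A_{3}$ on the kernel --- in particular $g\ge 0$, $g'\le 0$, and $\int_{0}^{\infty}g<1$ --- so that a Young-type splitting bounds the history integral by a multiple of $\mathcal{H}$ plus a fixed constant $\mathcal{C}_{4}$, rather than by an uncontrolled $\|\nabla_{\mathbb{B}}u_{t}\|$-term. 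I expect this memory estimate to be the main obstacle: the convolution couples the present velocity with the entire past history of $\nabla_{\mathbb{B}}u$, and the monotonicity $g'\le 0$ must be used carefully to keep only kernel-weighted energy quantities on the right-hand side.

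Finally, assembling these bounds yields $\mathcal{H}'(t)\le \mathcal{C}_{3}\,\mathcal{H}(t)^{k}+\mathcal{H}(t)+\mathcal{C}_{4}$. Separating variables and integrating from $0$ to $T$, and using that $\mathcal{H}(t)\to\infty$ as $t\to T^{-}$, the substitution $S=\mathcal{H}(t)$ gives
\[
\int_{\mathcal{H}(0)}^{\infty}\frac{1}{\mathcal{C}_{3}S^{k}+S+\mathcal{C}_{4}}\,ds \le \int_{0}^{T}dt = T,
\]
which is precisely the asserted lower bound; the left-hand integral converges because $k>1$.
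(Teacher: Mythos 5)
Your route is genuinely different from the paper's, and as written it has a gap at exactly the step you yourself flag as the main obstacle. The paper never differentiates an energy-type functional: it takes $\mathcal{H}(t)=\int_{\mathbb{B}}h(x)|u(x,t)|^{p}\frac{dx_{1}}{x_{1}}dx'$, whose derivative $\mathcal{H}'(t)=p\int_{\mathbb{B}}|u|^{p-2}u\,u_{t}\frac{dx_{1}}{x_{1}}dx'$ contains no $u_{tt}$, no damping term and no memory term, so the equation is never substituted into $\mathcal{H}'$ at all. The PDE enters only indirectly, through the monotonicity of the energy (Lemma \ref{l.1}) combined with assumption $A_{3}$, which yields the bound \ref{2.55} of $\|u_{t}\|^{2}_{L^{\frac{n}{2}}(\mathbb{B})}+\frac{1}{(p-1)^{2}}\|\nabla_{\mathbb{B}}u\|^{2}_{L^{\frac{n}{2}}(\mathbb{B})}$ by $\frac{2}{p}\mathcal{H}(t)+2I(0)$, and through the interpolation--embedding estimates \ref{2.52} and \ref{2.53} that control $\int_{\mathbb{B}}|u|^{2(p-1)}\frac{dx_{1}}{x_{1}}dx'$ by $\mathcal{C}_{i}\bigl(\|u\|^{p}_{L^{\frac{n}{p}}(\mathbb{B})}+\|\nabla_{\mathbb{B}}u\|^{2}_{L^{\frac{n}{2}}(\mathbb{B})}\bigr)^{k_{i}}$. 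Together these give $\mathcal{H}'\leq\mathcal{C}_{3}\mathcal{H}^{k_{i}}+\mathcal{H}+\mathcal{C}_{4}$, and Theorem \ref{th.1} supplies $\mathcal{H}(t)\to\infty$ as $t\to T$, so separation of variables finishes. The entire point of this choice of functional is that the memory term never appears.

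In your plan, by contrast, after substituting $u_{tt}$ and integrating by parts the memory contribution is $\int_{0}^{t}g(t-\tau)\int_{\mathbb{B}}\nabla_{\mathbb{B}}u_{t}(t)\cdot\nabla_{\mathbb{B}}u(\tau)\frac{dx_{1}}{x_{1}}dx'\,d\tau$, and no Young-type splitting can bound a product containing $\nabla_{\mathbb{B}}u_{t}(t)$ by a multiple of your $\mathcal{H}(t)$ plus a constant: $\|\nabla_{\mathbb{B}}u_{t}\|_{L^{\frac{n}{2}}(\mathbb{B})}$ is controlled neither by $\mathcal{H}$ nor by anything else in the problem (there is no strong damping term $-\Delta_{\mathbb{B}}u_{t}$ in \ref{1.1}), and the sign conditions $g\geq 0$, $g'\leq 0$ cannot remove that factor from a bilinear estimate. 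The only standard resolution is the exact-derivative identity computed in the proof of Lemma \ref{l.1}, namely that this convolution term equals $\frac{1}{2}\frac{d}{dt}\bigl[\int_{0}^{t}g(s)ds\,\|\nabla_{\mathbb{B}}u(t)\|^{2}_{L^{\frac{n}{2}}(\mathbb{B})}-(g~o~\nabla_{\mathbb{B}}u)(t)\bigr]+\frac{1}{2}(g'~o~\nabla_{\mathbb{B}}u)(t)-\frac{1}{2}g(t)\|\nabla_{\mathbb{B}}u(t)\|^{2}_{L^{\frac{n}{2}}(\mathbb{B})}$; this forces your functional to be the true viscoelastic energy $\frac{1}{2}\|u_{t}\|^{2}_{L^{\frac{n}{2}}(\mathbb{B})}+\frac{1}{2}\bigl(1-\int_{0}^{t}g(s)ds\bigr)\|\nabla_{\mathbb{B}}u\|^{2}_{L^{\frac{n}{2}}(\mathbb{B})}+\frac{1}{2}(g~o~\nabla_{\mathbb{B}}u)(t)$, whereas the augmentation you propose, $\int_{0}^{t}g(t-\tau)\|\nabla_{\mathbb{B}}u(\tau)\|^{2}_{L^{\frac{n}{2}}(\mathbb{B})}d\tau$, does not produce the cancellation. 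With that replacement the leftover kernel terms are $\leq 0$ by $A_{2}$ and may be discarded, your source estimate and final integration go through (with an exponent and constants different from the paper's $k_{i}$, $\mathcal{C}_{3}$, $\mathcal{C}_{4}$, and with the lower limit $\mathcal{H}(0)$ replaced by the initial energy), and blow-up of this energy does follow from \ref{2.57} via the embedding $\|u\|_{L^{\frac{n}{p}}(\mathbb{B})}\leq C\|\nabla_{\mathbb{B}}u\|_{L^{\frac{n}{2}}(\mathbb{B})}$. But note that the hard step of your argument is then a re-derivation of Lemma \ref{l.1}, which the paper's choice of functional renders unnecessary.
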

Our paper organized as follows. Section 2 is devoted to introduce
and construct stretched manifold $\mathbb{B}$ associated to a given
manifold with conical singularities. Section 3 is concerned with
some subsidiary results about the energy functional corresponding to
the problem \ref{1.1}. Finally, section 4 is dedicated to proof of
our main results.

\section{Cone Sobolev Spaces}
In this section, we recall some definitions and notations from
Sobolev spaces on manifolds with conical singularities. We refer
enthusiastic readers to \cite{CLW1,S} and the references therein.

Let $B$ be a manifold with conical singularities $x_{1},...,x_{N}.$
First, for simplicity let us  consider the case $N =1$ and  set $x=
x_{1}$. If $X$ is $C^{\infty}$ closed compact manifold, the cone
$X^{\Delta} := \frac{\bar{\mathbb{R}}_{+}\times X}{\{0\}\times X}$
is an example of such an $B.$ In this case the conical singularity
is represented by $\{0\} \times X$ in the quotient space. In
general, $B$ is locally near $x$ modelled on such a cone. More
precisely, $B - \{x\}$ is smooth, and we have a singular chart
$$\chi : G \rightarrow X^{\Delta}$$ for some neighborhood $G$ of $x$
in $M$ and a smooth manifold $X= X(x)$ where $\chi(x)$ is equal to
the vertex of $X^{\Delta}$, $\varphi = \chi|_{G - \{x\}} : G- \{x\}
\rightarrow X^{\wedge}:= \mathbb{R}_{+}\times X$ is a diffeomorphism
\cite{CLW1}. More precisely,  a finite dimensional manifold $B$ with
conical singularities, is a topological space with a finite subset
$B_{0}= \{x_{1},...,x_{N}\}\subset B$ of conical singularities,
which has the following two properties:

$a)$ $B - B_{0}$ is a $C^{\infty}$ manifold.

$b)$ any $x\in B_{0}$ has an open neighborhood $G$ in $B$, such that
there is a homeomorphism $\chi :G \rightarrow X^{\Delta}$ for some
closed compact $C^{\infty}$ manifold $X = X(x)$ and $\varphi$
restricts a diffeomorphism $\varphi': G- \{x\} \rightarrow
X^{\wedge}.$

For such a manifold, let $n\geq 2 $ and $X \subset S^{n - 1}$ be a
bounded open set in the unit sphere of $\mathbb{R}_{x}^{n}.$ The set
$B:= \biggl\{ x\in \mathbb{R}^{n} - \{0\} ~~~; \hspace{0.3 cm}
\frac{x}{|x|}\in X\biggr\} \cup\{0\}$ is an infinite cone with the
base $X$ and the critical point $\{0\}.$ Using the polar
coordinates, one can get a description of $B - \{0\} $ in the form
$X^{\wedge} = \mathbb{R}_{+}\times X,$ which is called the open
stretched cone with the base $X,$ and $\{0\}\times X$ is the
boundary of $X^{\wedge}.$

Now, we assume that the manifold $B$ is paracompact and of dimension
$n.$  By this assumption we can define the stretched manifold
associated with $B$. Let $\mathbb{B}$ be a $C^{\infty}$ manifold
with compact $C^{\infty}$ boundary $\partial\mathbb{B}
\cong\bigcup\limits_{x\in B_{0}}X(x)$ for which there exists a
diffeomorphism $B - B_{0}\cong\mathbb{B}- \partial\mathbb{B} :=
int\mathbb{B},$ the restriction of which to $G_{1}- B_{0}\cong
U_{1}- \partial\mathbb{B}$ for an open neighborhood $G_{1}\subset B$
near the points of $B_{0}$ and a collar neighborhood $U_{1}\subset
\mathbb{B}$ with $U_{1}\cong\bigcup\limits_{x\in B_{0}}\{[0,1)
\times X(x)\}.$ The typical differential operators on a manifold
with conical singularities, called Fuchs type , are operators that
are in a neighborhood of $x_{1}=0$ of the following form $$A =
x_{1}^{-m}\sum_{k=0}^{m}a_{k}(x_{1}) (-x_{1} \partial_{x_{1}})^{k}$$
with $(x_{1},x)\in X^{\wedge}$ and $a_{k}(x_{1})\in
C^{\infty}(\bar{\mathbb{R}}_{+}, Diff^{m-k}(X))$ \cite{S}. The
differential $x_{1}\partial{x_{1}}$ in Fuchs type operators provokes
us to apply the Mellin transform $M: C_{0}^{\infty}(\mathbb{R}_{+})
\rightarrow \mathcal{A}(\mathbb{C}),$ for $u(x_{1})\in
C_{0}^{\infty}(\mathbb{R}_{+}),$ $z\in \mathbb{C},$ defined as
\begin{equation}\label{2.4}
M u(z):= \int_{0}^{+\infty}x_{1}^{z} u(x_{1})\frac{dx_{1}}{x_{1}},
\end{equation}
where $\mathcal{A}(\mathbb{C})$ denotes the space of entire
functions.

One can find further details on Fuchs type operators and all
implications and definitions of the cone Sobolev spaces in
\cite{CLW1,S}. We use the so-called weighted Melline transform
\[M_{\gamma}u:= M u_{|_{\Gamma_{\frac{1}{2} - \gamma}}} =
\int_{0}^{+\infty}x_{1}^{\frac{1}{2} - \gamma + i\tau} u(x_{1})
\frac{dx_{1}}{x_{1}},\] where $\Gamma_{\beta}:= \{z\in
\mathbb{C}~~~;\hspace{0.3 cm} Re z= \beta\}.$ The inverse weighted
Melline transform is defined as \[(M_{\gamma}^{-1} g)(x_{1}) =
\frac{1}{2i\pi}\int_{\Gamma_{\frac{1}{2}}}x_{1}^{-z} g(z) dz.\]

In order to define cone Sobolev spaces on the stretched manifolds,
at the first we introduce the weighted Sobolev spaces on
$\mathbb{R}^{n}$ and then by using of partition unity, we introduce
suitable weighted cone Sobolev space on the stretched manifold
$\mathbb{B}.$
\begin{definition}
For $(x_{1}, x')\in\mathbb{R}_{+} \times \mathbb{R}^{n - 1} =
\mathbb{R}^{n}_{+}$ we say that $u(x_{1},x')\in
L_{p}(\mathbb{R}^{n}_{+}, \frac{dx_{1}} {x_{1}}dx')$ if
\[\|u\|_{L_{p}}= \biggl(\int_{\mathbb{R}_{+}}\int_{\mathbb{R}^{n -
1}}x_{1}^{n}|u(x_{1},x')|^{p}
\frac{dx_{1}}{x_{1}}dx'\biggr)^{\frac{1} {p}} < \infty.\]
\end{definition}
The weighted $L_{p}-$spaces with weight data $\gamma \in \mathbb{R}$
is denoted by
$L_{p}^{\gamma}(\mathbb{R}_{+}^{n},\frac{dx_{1}}{x_{1}}dx'). $ In
fact, if $u(x_{1},x')\in
L_{p}^{\gamma}(\mathbb{R}_{+}^{n},\frac{dx_{1}}{x_{1}}dx'),$ then
$x_{1}^{-\gamma}u(x_{1},x')\in L_{p}
(\mathbb{R}_{+}^{n},\frac{dx_{1}}{x_{1}}dx'),$ and
\[\|u\|_{L^{p}}^{\gamma} =
\biggl(\int_{\mathbb{R}_{+}}\int_{\mathbb{R}^{n - 1}}
x_{1}^{n}|x_{1}^{
-\gamma}u(x_{1},x')|^{p}\frac{dx_{1}}{x_{1}}dx'\biggr)^{\frac{1}{p}}
< \infty.\]

Now, we can define the weighted $p-$Sobolev spaces for $1 \leq p <
\infty.$
\begin{definition}
For $m\in \mathbb{N},$ $\gamma\in \mathbb{R}$ and $1 \leq p <
\infty,$ the spaces
\begin{equation}\label{2.5}
\mathcal{H}^{m,\gamma}_{p}(\mathbb{R}_{+}^{n}):= \biggl\{u\in
\mathcal{D}'(\mathbb{R}^{n}_{+})~~~~;~~~~~ x_{1}^{\frac{n}{p} -
\gamma}( x_{1}\partial_{x_{1}})^{\alpha}\partial_{x'}^{\beta} u \in
L_{p}(\mathbb{R}^{n}_{+}, \frac{dx_{1}}{x_{1}}dx')\biggr\},
\end{equation}
for any $\alpha \in \mathbb{N},$ $\beta \in \mathbb{N}^{n - 1}$ and
$|\alpha| + |\beta| \leq m.$ In other words, if $u(x_{1},x)\in
\mathcal{H} ^{m , \gamma}_{p}(\mathbb{R}^{n}_{+}),$ then
$(x_{1}\partial_{x_{1}})^{\alpha}\partial_{x'}^{\beta} u \in
L_{p}^{\gamma}(\mathbb{R}^{n}_{+}, \frac{dx_{1}} {x_{1}}dx').$
\end{definition}
Hence, $\mathcal{H}^{m,\gamma}_{p}(\mathbb{R}^{n}_{+})$ is a Banach
space with norm \[\|u\|_{\mathcal{H}^{m ,
\gamma}_{p}(\mathbb{R}^{n}_{+})} = \sum_{|\alpha| + |\beta| \leq m}
\biggl(\int\int_{\mathbb{R}_{+}^{n}}
x_{1}^{n}|x_{1}^{-\gamma}(x_{1}\partial_{x_{1}})^{\alpha}\partial_{x'}^{\beta}
u(x_{1},x')|^{p} \frac{dx_{1}}{x_{1}}dx'\biggr)^{\frac{1}{p}}.\]

Let $X$ be a closed compact $C^{\infty}$ manifold, and $\mathcal{U}
= \{U_{1},..., U_{N}\}$ an open covering of $X$ by coordinate
neighborhoods. If we fix a subordinate partition of unity
$\{\varphi_{1},...,\varphi_{N}\}$ and charts $\chi_{j}: U_{j}
\rightarrow \mathbb{R}^{n -1},$ $j = 1, ...,N.$ Then we say that
$u\in \mathcal{H}^{m,\gamma}_{p}(X^{\wedge})$ if and only if $u\in
\mathcal{D}'(X^{\wedge})$ with the norm \[\|u\|_{\mathcal{H}^
{m,\gamma}_{p}(X^{\wedge})} = \biggl\{\sum_{j =1}^{N}\|(1 \times
\chi_{j}^{*})^{-1}\varphi_{j}
u\|^{p}_{\mathcal{H}^{m,\gamma}_{p}(\mathbb{R}_{+}^{n})}\biggr\}^{\frac{1}{p}}
< \infty,\] where $1 \times \chi_{j}^{*}:
C_{0}^{\infty}(\mathbb{R}_{+}\times \mathbb{R}^{n - 1}) \rightarrow
C_{0}^{\infty}(\mathbb{R}_{+} \times U_{j})$ is the pull-back
function with respect to $1 \times \chi_{j}: \mathbb{R}_{+}\times
U_{j} \rightarrow \mathbb{R}_{+}\times \mathbb{R}^{n - 1}.$ Denote
the $\mathcal{H}^{m,\gamma}_{p,0}(X^{\wedge})$ as subspace of
$\mathcal{H}^{m,\gamma}_{p}(X^{\wedge})$ which is defined as the
closure of $C_{0}^{\infty}(X^ {\wedge})$ with respect to the norm
$\| . \|_{\mathcal{H}^{m, \gamma}_{p}(X^{\wedge})}.$  Now, we have
the following definition
\begin{definition}
Let $\mathbb{B} = [0,1) \times X$ be the stretched manifold of the
manifold $B $ with conical singularity. Then the cone Sobolev space
$\mathcal{H}^{m,\gamma}_{p}(\mathbb{B}),$ for $m\in \mathbb{N} ,
\gamma\in \mathbb{R}$ and $p\in (1 , \infty),$ is defined by
$$\mathcal{H}^{m,\gamma}_{p}(\mathbb{B})= \{u \in W^{m,p}_{loc}(int\mathbb{B})~~~~~~~~;~~~~~~~~~~~~
\omega u\in \mathcal{H}^{m,\gamma}_{p}(X^{\wedge})\},$$ for any
cut-off function $\omega$ supported by a collar neighborhood of
$[0,1) \times \partial\mathbb{B}.$ Moreover, the subspace
$\mathcal{H}^{m,\gamma}_{p,0}(\mathbb{B})$ of
$\mathcal{H}^{m,\gamma}_{p}(\mathbb{B})$ is defined by
$$\mathcal{H}^{m,\gamma}_{p,0}(\mathbb{B}):= (\omega) \mathcal{H}^{m,\gamma}_{p,0}(X^{\wedge}) +
(1 - \omega) W^{m,p}_{0}(int\mathbb{B}),$$ where
$W^{m,p}_{0}(int\mathbb{B})$ denotes the closure of
$C_{0}^{\infty}(int \mathbb{B})$ in Sobolev space
$W^{m,p}(\tilde{X})$ when $\tilde{X}$ is closed compact
$C^{\infty}$manifold of dimension $n$ containing $\mathbb{B}$ as a
submanifold with boundary.
\end{definition}

\begin{definition}
Let  $\mathbb{B} = [0,1) \times X.$ We say that $u(x)\in
L^{\gamma}_{p}(\mathbb{B})$ with $1 < p <  \infty, \gamma\in
\mathbb{R},$ if $$\|u\|_{L^{\gamma}_{p}(\mathbb{B})} =
\int\limits_{\mathbb{B}}x_{1}^{n}|x_{1}^{-\gamma}
u(x)|^{p}(\frac{dx_{1}}{x_{1}}) dx' < \infty .$$
\end{definition}
For $\gamma = \frac{n}{p}$ and $\gamma = \frac{n}{q}$ such that
$\frac{1}{p} + \frac{1}{q} = 1,$  we have the following
H$\ddot{o}$lder's inequality
\begin{equation}\int\limits_{\mathbb{B}}|u(x) v(x)|
\frac{dx_{1}}{x_{1}} dx' \leq
\biggl(\int\limits_{\mathbb{B}}|u(x)|^{p}\frac{dx_{1}}{x_{1}}
dx'\biggr)^{\frac{1}{p}}\biggl(\int\limits_{\mathbb{B}}|v(x)|^{q}\frac{dx_{1}}{x_{1}}
dx'\biggr)^{\frac{1}{q}} .\end{equation} In the sequel, for
convenience we denote
\[( u, v )_{2} = \int\limits_{\mathbb{B}}
u(x)v(x)~\frac{dx_{1}}{x_{1}} dx'~~~and~~~
\|u\|_{L^{\frac{n}{p}}_{p}(\mathbb{B})} =
\int\limits_{\mathbb{B}}|u(x)|^{p}~\frac{dx_{1}}{x_{1}} dx'.\]

\section{Some auxiliary results}

In this section, we give some auxiliary results about the  problem
$\ref{1.1}$ and we get some properties of energy functional that we
will use to prove our main results. therefore, similar to the
classical case, we introduced  functional $I(t)= I(u(t),u_{t}(t)),$
on cone Sobolev space
$\mathcal{H}^{1,\frac{n}{2}}_{2,0}(\mathbb{B})$ as follows:
\begin{eqnarray}\label{2.1}
\nonumber I(t)&=& \frac{1}{2}
\|u_{t}(t)\|^{2}_{L^{\frac{n}{2}}(\mathbb{B})} + \frac{1}{2}
\biggl(1- \int_{0}^{t}
g(s)ds\biggr)\|\nabla_{\mathbb{B}}u(t)\|_{L^{\frac{n}{2}}(\mathbb{B})}^{2}
+ \frac{1}{2} (g~ o~ \nabla_{\mathbb{B}}u)(t) \\&-& \frac{1}{p}
\int_{\mathbb{B}} h(x)|u|^{p}\frac{dx_{1}}{x_{1}}dx',
\end{eqnarray}
where $$(g~ o~ v)(t) = \int_{0}^{t}g(t - \tau)\|v(t) -
v(\tau)\|^{2}_{L^{\frac{n}{2}}(\mathbb{B})} d\tau.$$

\begin{lemma}\label{l.1}
Suppose that conditions $A_{1}-A_{4}$ and \ref{2} hold.  Let $u$ be
a solution of the problem \ref{1.1}. Then the energy functional
$I(t)$ is non-increasing.
\end{lemma}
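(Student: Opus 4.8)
The plan is to show that $\frac{d}{dt} I(t) \le 0$ by differentiating the energy functional $I(t)$ in \eqref{2.1} and using the equation in \eqref{1.1} together with the sign conditions $A_{1}$--$A_{4}$. First I would compute the time derivative term by term. The kinetic term contributes $(u_{t}, u_{tt})_{2}$, and the elastic term $\frac{1}{2}\bigl(1 - \int_0^t g(s)\,ds\bigr)\|\nabla_{\mathbb{B}}u\|^2$ contributes two pieces: $\bigl(1 - \int_0^t g\bigr)(\nabla_{\mathbb{B}}u, \nabla_{\mathbb{B}}u_t)_2$ from differentiating $u$, and a crucial negative piece $-\frac{1}{2}g(t)\|\nabla_{\mathbb{B}}u(t)\|^2$ coming from differentiating the upper limit of the integral. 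The memory term $\frac12 (g\circ\nabla_{\mathbb{B}}u)(t)$ differentiates, via the Leibniz rule on the convolution, into $\frac12 (g'\circ\nabla_{\mathbb{B}}u)(t)$ plus a boundary-type term $\frac12 g(t)\|\nabla_{\mathbb{B}}u(t)\|^2$ (which will cancel the negative piece above) and a cross term $-\bigl(\int_0^t g(t-\tau)(\nabla_{\mathbb{B}}u(t) - \nabla_{\mathbb{B}}u(\tau))\,d\tau,\ \nabla_{\mathbb{B}}u_t\bigr)_2$. Finally, the source term $-\frac1p\int_{\mathbb{B}}h|u|^p$ differentiates to $-(h|u|^{p-2}u,\ u_t)_2$.

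The key step is then to substitute the PDE. Multiplying \eqref{1.1} by $u_t$ and integrating over $\mathbb{B}$ against the measure $\frac{dx_1}{x_1}dx'$, and using the integration-by-parts (Green-type) identity for the Fuchsian Laplacian $\Delta_{\mathbb{B}}$ — namely $(\Delta_{\mathbb{B}}u, v)_2 = -(\nabla_{\mathbb{B}}u, \nabla_{\mathbb{B}}v)_2$ for $v$ vanishing on $\partial\mathbb{B}$, which holds because $\nabla_{\mathbb{B}} = (x_1\partial_{x_1}, \partial_{x_2},\dots,\partial_{x_n})$ is the formal adjoint structure adapted to the weight $\frac{dx_1}{x_1}dx'$ — I would obtain an expression for $(u_t, u_{tt})_2$. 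Specifically, the $-\Delta_{\mathbb{B}}u$ term yields $(\nabla_{\mathbb{B}}u, \nabla_{\mathbb{B}}u_t)_2$, the convolution term yields $-\bigl(\int_0^t g(t-\tau)\nabla_{\mathbb{B}}u(\tau)\,d\tau,\ \nabla_{\mathbb{B}}u_t\bigr)_2$, the damping term yields $-(f|u_t|^{m-2}u_t, u_t)_2 = -\int_{\mathbb{B}}f|u_t|^m$, and the source term yields $(h|u|^{p-2}u, u_t)_2$. Plugging this into the differentiated $I(t)$, the elastic cross terms, the source terms, and the convolution cross terms are designed to cancel exactly (after rewriting $\int_0^t g(t-\tau)\nabla_{\mathbb{B}}u(\tau)$ as $\int_0^t g(t-\tau)[\nabla_{\mathbb{B}}u(\tau) - \nabla_{\mathbb{B}}u(t)]\,d\tau + (\int_0^t g)\nabla_{\mathbb{B}}u(t)$ so the memory cross term matches), leaving only
\[
I'(t) = \frac{1}{2}(g'\circ\nabla_{\mathbb{B}}u)(t) - \int_{\mathbb{B}}f(x)|u_t|^m\,\frac{dx_1}{x_1}dx'.
\]

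To conclude, I would invoke the sign hypotheses: by $A_2$ we have $g'(s)\le 0$, so $(g'\circ\nabla_{\mathbb{B}}u)(t) = \int_0^t g'(t-\tau)\|\nabla_{\mathbb{B}}u(t)-\nabla_{\mathbb{B}}u(\tau)\|^2\,d\tau \le 0$, and by $A_4$ the function $f$ is positive, so $\int_{\mathbb{B}}f|u_t|^m \ge 0$. Both terms are nonpositive, hence $I'(t)\le 0$ and $I(t)$ is non-increasing. The main obstacle I anticipate is the careful bookkeeping of the boundary term $\frac12 g(t)\|\nabla_{\mathbb{B}}u(t)\|^2$ produced both by differentiating the variable upper limit in the elastic coefficient and by differentiating the convolution $(g\circ\nabla_{\mathbb{B}}u)(t)$; these must cancel precisely, and the cross term from the memory kernel must be correctly paired with the convolution term coming from the PDE. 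A secondary technical point is justifying the Green identity for $\Delta_{\mathbb{B}}$ in the weighted cone setting and verifying that the boundary contributions at $x_1=0$ vanish for $u\in\mathcal{H}^{1,\frac{n}{2}}_{2,0}(\mathbb{B})$, which relies on the zero boundary condition built into the closure defining this space.
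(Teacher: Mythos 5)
Your overall strategy is exactly the paper's: test the equation with $u_{t}$ (equivalently, differentiate $I(t)$ and substitute the PDE via the Green identity for $\Delta_{\mathbb{B}}$), split $\nabla_{\mathbb{B}}u(\tau) = \bigl(\nabla_{\mathbb{B}}u(\tau)-\nabla_{\mathbb{B}}u(t)\bigr) + \nabla_{\mathbb{B}}u(t)$ inside the convolution, and conclude from the sign hypotheses. However, your differentiation of the memory term is wrong on two counts, and the error sits precisely at the ``careful bookkeeping'' step you yourself flag as the main obstacle. Since the integrand of $(g~o~\nabla_{\mathbb{B}}u)(t)=\int_{0}^{t}g(t-\tau)\|\nabla_{\mathbb{B}}u(t)-\nabla_{\mathbb{B}}u(\tau)\|^{2}_{L^{\frac{n}{2}}(\mathbb{B})}\,d\tau$ vanishes at $\tau=t$, the Leibniz boundary term is $g(0)\cdot 0=0$: differentiating the memory term produces \emph{no} term $\frac{1}{2}g(t)\|\nabla_{\mathbb{B}}u(t)\|^{2}_{L^{\frac{n}{2}}(\mathbb{B})}$, so nothing cancels the genuinely negative piece $-\frac{1}{2}g(t)\|\nabla_{\mathbb{B}}u(t)\|^{2}_{L^{\frac{n}{2}}(\mathbb{B})}$ coming from the coefficient $1-\int_{0}^{t}g(s)ds$. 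Moreover, the cross term enters with a \emph{plus} sign, $+\int_{0}^{t}g(t-\tau)\bigl(\nabla_{\mathbb{B}}u(t)-\nabla_{\mathbb{B}}u(\tau),\,\nabla_{\mathbb{B}}u_{t}(t)\bigr)_{2}\,d\tau$, not the minus sign you wrote. If one carries your signs through the substitution of the PDE, the convolution terms do not cancel at all: one is left with the indefinite quantity $-2\int_{0}^{t}g(t-\tau)\bigl(\nabla_{\mathbb{B}}u(t)-\nabla_{\mathbb{B}}u(\tau),\,\nabla_{\mathbb{B}}u_{t}\bigr)_{2}\,d\tau$, and the argument does not close.

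With the correct signs the computation does close, and it yields the identity the paper derives,
\[
I'(t) = \frac{1}{2}\,(g'~o~\nabla_{\mathbb{B}}u)(t) \;-\; \frac{1}{2}\,g(t)\|\nabla_{\mathbb{B}}u(t)\|^{2}_{L^{\frac{n}{2}}(\mathbb{B})} \;-\; \int_{\mathbb{B}}f(x)|u_{t}|^{m}\,\frac{dx_{1}}{x_{1}}dx',
\]
which differs from your asserted identity by the middle term. The lemma's conclusion $I'(t)\leq 0$ still holds, but it needs all three sign facts: $g'\leq 0$ for the first term, $g\geq 0$ (also from $A_{2}$) for the middle term you lost, and $f\geq 0$ (from $A_{4}$) for the last. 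So the gap is not in the choice of method --- it is the paper's method --- but in the Leibniz computation: as written, your claimed cancellations fail and your final identity is false; once corrected, the term $-\frac{1}{2}g(t)\|\nabla_{\mathbb{B}}u(t)\|^{2}_{L^{\frac{n}{2}}(\mathbb{B})}$ survives and must be disposed of using $g\geq 0$.
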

\begin{proof}
Let us multiply Eq. (1.1) by $u_{t}$ and integrating over
$\mathbb{B},$ then e obtain
\begin{eqnarray}\label{2.2}
\nonumber\langle u_{tt} , u_{t}\rangle &+& \langle
-\Delta_{\mathbb{B}}u , u_{t}\rangle + \langle\int_{0}^{t} g(t-\tau)
\Delta_{\mathbb{B}}u(\tau) d\tau , u_{t}\rangle + \langle
f(x)u_{t}|u_{t}|^{m-2}, u_{t}\rangle \\&=& \langle h(x)u|u|^{p-2},
u_{t}\rangle \Rightarrow
\end{eqnarray}
\begin{eqnarray}\label{2.3}
\frac{d}{dt}\biggl[ \frac{1}{2} \int_{\mathbb{B}}|u_{t}|^{2}
\frac{dx_{1}}{x_{1}}dx' &+& \frac{1}{2}\int_{\mathbb{B}}
|\nabla_{\mathbb{B}}u|^{2} \frac{dx_{1}}{x_{1}}dx' - \frac{1}{p}
\int_{\mathbb{B}}h(x)|u|^{p} \frac{dx_{1}}{x_{1}}dx' \biggr]
\\&-&\nonumber \int_{0}^{t}
g(t-\tau)\int_{\mathbb{B}}\nabla_{\mathbb{B}}u_{t}(t).\nabla_{\mathbb{B}}u(\tau)
\frac{dx_{1}}{x_{1}}dx' d\tau =
-\int_{\mathbb{B}}f(x)|u_{t}|^{m}\frac{dx_{1}}{x_{1}}dx'.
\end{eqnarray}
On the other hand,
\begin{eqnarray}\label{2.4}
\nonumber\int_{0}^{t}g(t-\tau)\int_{\mathbb{B}}\nabla_{\mathbb{B}}u_{t}(t)&.&\nabla_{\mathbb{B}}u(\tau)\frac{dx_{1}}{x_{1}}dx'd\tau
=
\int_{0}^{t}g(t-\tau)\int_{\mathbb{B}}\nabla_{\mathbb{B}}u_{t}(t).(\nabla_{\mathbb{B}}u(\tau)
- \nabla_{\mathbb{B}}u(t))\frac{dx_{1}}{x_{1}}dx'd\tau
\\&+&\nonumber
\int_{0}^{t}g(t-\tau)\int_{\mathbb{B}}\nabla_{\mathbb{B}}u_{t}(t).\nabla_{\mathbb{B}}u(t)\frac{dx_{1}}{x_{1}}dx'd\tau
\\&=&\nonumber
-\frac{1}{2}\int_{0}^{t}g(t-\tau)\frac{d}{dt}\int_{\mathbb{B}}|\nabla_{\mathbb{B}}u(\tau)
- \nabla_{\mathbb{B}}u(t)|^{2}\frac{dx_{1}}{x_{1}}dx'd\tau
\\&+&\nonumber
\int_{0}^{t}g(\tau)\biggl(\frac{d}{dt}\frac{1}{2}\int_{\mathbb{B}}|\nabla_{\mathbb{B}}u(t)|^{2}\frac{dx_{1}}{x_{1}}dx'\biggr)d\tau
\\&=&\nonumber -
\frac{1}{2}\frac{d}{dt}\biggl[\int_{0}^{t}g(t-\tau)\int_{\mathbb{B}}|\nabla_{\mathbb{B}}u(\tau)
- \nabla_{\mathbb{B}}u(t)|^{2}\frac{dx_{1}}{x_{1}}dx'd\tau\biggr]
\\&+&\nonumber
\frac{1}{2}\frac{d}{dt}\biggl[\int_{0}^{t}g(\tau)\int_{\mathbb{B}}|\nabla_{\mathbb{B}}u(t)|^{2}\frac{dx_{1}}{x_{1}}dx'd\tau\biggr]\\
&+&\nonumber
\frac{1}{2}\int_{0}^{t}g'(t-\tau)\int_{\mathbb{B}}|\nabla_{\mathbb{B}}u(\tau)
- \nabla_{\mathbb{B}}u(t)|^{2} \frac{dx_{1}}{x_{1}}dx'd\tau
\\&-&\frac{1}{2}g(t)\int_{\mathbb{B}}|\nabla_{\mathbb{B}}u(t)|^{2}\frac{dx_{1}}{x_{1}}dx'd\tau.
\end{eqnarray}
Now, we situate estimations \ref{2.4} in the relation \ref{2.3}, we
get
\begin{eqnarray}\label{2.5}
\nonumber\frac{d}{dt}\biggl[\frac{1}{2}\int_{\mathbb{B}}|u_{t}|^{2}\frac{dx_{1}}{x_{1}}dx'
&+&
\frac{1}{2}\int_{\mathbb{B}}|\nabla_{\mathbb{B}}u|^{2}\frac{dx_{1}}{x_{1}}dx'
-
\frac{1}{p}\int_{\mathbb{B}}h(x)|u|^{p}\frac{dx_{1}}{x_{1}}dx'\biggr]\\&+&\nonumber
\frac{1}{2}\frac{d}{dt}\biggl[\int_{0}^{t}g(t-\tau)\int_{\mathbb{B}}|\nabla_{\mathbb{B}}u(\tau)
- \nabla_{\mathbb{B}}u(t)|^{2}\frac{dx_{1}}{x_{1}}dx'd\tau\biggr]
\\&-&
\nonumber\frac{1}{2}\frac{d}{dt}\biggl[\int_{0}^{t}g(\tau)\|\nabla_{\mathbb{B}}u(t)\|^{2}d\tau\biggr]\\&=&\nonumber
-\int_{\mathbb{B}}f(x)|u_{t}|^{m}\frac{dx_{1}}{x_{1}}dx' +
\frac{1}{2}
\int_{0}^{t}g'(t-\tau)\int_{\mathbb{B}}|\nabla_{\mathbb{B}}u(\tau) -
\nabla_{\mathbb{B}}u(t)|^{2}\frac{dx_{1}}{x_{1}}dx'd\tau\\&-&\frac{1}{2}g(t)\|\nabla_{\mathbb{B}}u\|_{L^{\frac{n}{2}}(\mathbb{B})}^{2}
\leq 0.
\end{eqnarray}
Therefore, $I'(t)\leq 0.$ It follows that $I(t)$ is non-increasing.
\end{proof}

\begin{lemma}\label{l.2}
Let $u$ be a weak solution of \ref{1.1} such that conditions
$A_{1}-A_{4}$ and relation \ref{2} hold. Moreover, assume that
\begin{eqnarray}\label{l.2.1}
I(0) < I_{1} , \hspace{1 cm}
\|\nabla_{\mathbb{B}}u_{0}\|_{L^{\frac{n}{2}}(\mathbb{B})} >
(C_{*}~\sqrt[p]{C_{h}})^{-\frac{p}{p-2}} .
\end{eqnarray}
Then, there exists a constant $\beta >
(C_{*}~\sqrt[p]{C_{h}})^{-\frac{p}{p-2}}$ such that for any $t \in
[0, T)$
\begin{eqnarray}\label{l.2.2}
\biggl[\frac{1}{2}\biggl(1 -
\int_{0}^{t}g(s)ds\biggr)\|\nabla_{\mathbb{B}}u(t)\|_{L^{\frac{n}{2}}(\mathbb{B})}^{2}
+ \frac{1}{2} (g~ o~ \nabla_{\mathbb{B}}u)(t)\biggr]^{\frac{1}{2}}
\geq \beta
\end{eqnarray}
and for every $t \in [0, T)$
\begin{eqnarray}\label{l.2.3}
\|u\|_{\mathcal{H}^{1,\frac{n}{2}}_{2,0}(\mathbb{B})} \geq
\frac{\beta}{\sqrt{l}}.
\end{eqnarray}
\end{lemma}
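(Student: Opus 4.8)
The plan is to run a potential-well (stable/unstable set) argument driven by the monotonicity of $I(t)$ from Lemma \ref{l.1}. Write $\Phi(t):=\tfrac12\bigl(1-\int_0^t g(s)\,ds\bigr)\|\nabla_{\mathbb B}u(t)\|_{L^{\frac n2}(\mathbb B)}^2+\tfrac12(g\circ\nabla_{\mathbb B}u)(t)$ for the bracketed quantity in \eqref{l.2.2}, so that $I(t)=\tfrac12\|u_t\|^2_{L^{\frac n2}(\mathbb B)}+\Phi(t)-\tfrac1p\int_{\mathbb B}h|u|^p\frac{dx_1}{x_1}dx'$. Since $A_1$ gives $1-\int_0^t g\ge l$ and $(g\circ\nabla_{\mathbb B}u)(t)\ge0$, we have $\Phi(t)\ge\tfrac l2\|\nabla_{\mathbb B}u(t)\|^2_{L^{\frac n2}(\mathbb B)}$. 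First I would estimate the source term: by $A_4$, the Sobolev embedding $\mathcal H^{1,\frac n2}_{2,0}\hookrightarrow L^{\frac np}$ with best constant $C_{emb}$, and the Poincaré inequality with constant $C_{Poin}$, one gets $\int_{\mathbb B}h|u|^p\frac{dx_1}{x_1}dx'\le C_h\,(C_{emb}C_{Poin})^p\,\|\nabla_{\mathbb B}u\|^p_{L^{\frac n2}(\mathbb B)}$. Inserting $\|\nabla_{\mathbb B}u\|^2\le\tfrac2l\Phi$ together with the definition \eqref{1.3} of $C_*=C_{emb}C_{Poin}/\sqrt l$ converts this into $\int_{\mathbb B}h|u|^p\frac{dx_1}{x_1}dx'\le 2^{p/2}C_hC_*^p\,\Phi(t)^{p/2}$.

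Consequently, dropping the kinetic term, $I(t)\ge G\bigl(\Phi(t)^{1/2}\bigr)$ where $G(\lambda):=\lambda^2-\tfrac{2^{p/2}C_hC_*^p}{p}\lambda^p$. Since $p>2$, the function $G$ vanishes at $0$, rises to a single interior maximum at $\lambda_1:=\bigl(2^{1-p/2}/(C_hC_*^p)\bigr)^{1/(p-2)}$ and then decreases to $-\infty$; I would take $I_1:=G(\lambda_1)=\max_{\lambda\ge0}G(\lambda)$, the mountain-pass level. A direct computation yields $\lambda_1=2^{-1/2}(C_*\sqrt[p]{C_h})^{-p/(p-2)}$, so $\lambda_1$ is a fixed multiple of the threshold appearing in the hypothesis. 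Evaluating at $t=0$, where $1-\int_0^0 g=1$ and $(g\circ\nabla_{\mathbb B}u)(0)=0$, gives $\Phi(0)^{1/2}=\tfrac1{\sqrt2}\|\nabla_{\mathbb B}u_0\|_{L^{\frac n2}(\mathbb B)}$, and hence the assumption $\|\nabla_{\mathbb B}u_0\|_{L^{\frac n2}(\mathbb B)}>(C_*\sqrt[p]{C_h})^{-p/(p-2)}$ places the solution strictly to the right of the well: $\Phi(0)^{1/2}>\lambda_1$.

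The core of the argument is then an invariance (continuity) step. By Lemma \ref{l.1} and the hypothesis, $I(t)\le I(0)<I_1=\max_{\lambda\ge0}G(\lambda)$ for all $t\in[0,T)$, while $G(\Phi(t)^{1/2})\le I(t)$. Thus $G(\Phi(t)^{1/2})<I_1=G(\lambda_1)$ forces $\Phi(t)^{1/2}\ne\lambda_1$ for every $t$; since $t\mapsto\Phi(t)^{1/2}$ is continuous and $\Phi(0)^{1/2}>\lambda_1$, the intermediate value theorem excludes $\Phi(t)^{1/2}\le\lambda_1$, whence $\Phi(t)^{1/2}>\lambda_1$ on $[0,T)$. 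Choosing $\beta\in(\lambda_1,\Phi(0)^{1/2}]$ then yields \eqref{l.2.2}. Finally, \eqref{l.2.3} follows from \eqref{l.2.2}: using $1-\int_0^t g\ge l$, $(g\circ\nabla_{\mathbb B}u)\ge0$ and the cone-Sobolev (Poincaré) inequality relating $\|u\|_{\mathcal H^{1,\frac n2}_{2,0}(\mathbb B)}$ to $\|\nabla_{\mathbb B}u\|_{L^{\frac n2}(\mathbb B)}$, the lower bound $\Phi(t)\ge\beta^2$ transfers to $\|u\|_{\mathcal H^{1,\frac n2}_{2,0}(\mathbb B)}\ge\beta/\sqrt l$.

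The main obstacle I anticipate is twofold. The delicate analytic point is the continuity of $t\mapsto\Phi(t)^{1/2}$ for a mere weak solution, in particular the continuity of $t\mapsto(g\circ\nabla_{\mathbb B}u)(t)$ and of $t\mapsto\|\nabla_{\mathbb B}u(t)\|_{L^{\frac n2}(\mathbb B)}$, since this is exactly what legitimises the intermediate value step and must be extracted from the regularity built into the notion of weak solution. The bookkeeping point is to keep the constants $C_{emb},C_{Poin},l,C_h$ aligned through the definition of $C_*$, so that the computed threshold $\lambda_1$ matches $(C_*\sqrt[p]{C_h})^{-p/(p-2)}$ and the memory term $(g\circ\nabla_{\mathbb B}u)$, which is present in $\Phi$ but vanishes at $t=0$, is handled consistently on both sides of the estimate.
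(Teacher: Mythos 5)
Your overall skeleton (potential well, monotone energy from Lemma \ref{l.1}, continuity plus an intermediate-value argument) is the same as the paper's, and your source-term estimate and the computation of the mountain-pass level are consistent. The genuine gap is the final step of the invariance argument. What you actually establish is only the \emph{open} barrier bound $\Phi(t)^{1/2}>\lambda_1$ for every $t\in[0,T)$; from this, the assertion ``choosing $\beta\in(\lambda_1,\Phi(0)^{1/2}]$ then yields \ref{l.2.2}'' is a non sequitur, because nothing you have shown prevents $\inf_{t\in[0,T)}\Phi(t)^{1/2}$ from being exactly $\lambda_1$, in which case no fixed $\beta>\lambda_1$ satisfies $\Phi(t)^{1/2}\ge\beta$ for all $t$. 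The missing idea --- which is precisely what the paper's proof supplies --- is to define $\beta$ not as an arbitrary point of an interval but as the larger root of the well function at the level $I(0)$: since $I(0)<I_1=k(\alpha)$ and $k$ decreases to $-\infty$ on $(\alpha,\infty)$, there is a $\beta>\alpha$ with $k(\beta)=I(0)$; the hypothesis on $\|\nabla_{\mathbb{B}}u_0\|_{L^{\frac{n}{2}}(\mathbb{B})}$ gives $k(\alpha_0)\le I(0)=k(\beta)$ with $\alpha_0>\alpha$, hence $\alpha_0\ge\beta$ on the decreasing branch; and if the bracketed quantity ever dipped below $\beta$, continuity would produce a time $t_0$ at which it lies in $(\alpha,\beta)$, whence $I(t_0)\ge k(\cdot)>k(\beta)=I(0)$, contradicting $I(t)\le I(0)$. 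It is the \emph{equation} $k(\beta)=I(0)$, not membership of $\beta$ in some interval, that turns the barrier into a uniform quantitative bound strictly above the critical point.

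This specific choice of $\beta$ is also indispensable for \ref{l.2.3}, which you treat too quickly. A direct ``transfer'' of $\Phi(t)\ge\beta^2$ into $\|u\|_{\mathcal{H}^{1,\frac{n}{2}}_{2,0}(\mathbb{B})}\ge\beta/\sqrt{l}$ via Poincar\'e cannot work, because $\Phi$ contains the memory term $(g~o~\nabla_{\mathbb{B}}u)(t)$, which depends on the whole history $\{u(\tau):\tau\le t\}$ and is not controlled by the current state $\|u(t)\|_{\mathcal{H}^{1,\frac{n}{2}}_{2,0}(\mathbb{B})}$. The paper instead routes through the energy inequality, $\Phi(t)\le I(t)+\frac{1}{p}\int_{\mathbb{B}}h|u|^p\frac{dx_1}{x_1}dx'\le I(0)+\frac{C_hC_*^pl^{p/2}}{p}\|u\|^p_{\mathcal{H}^{1,\frac{n}{2}}_{2,0}(\mathbb{B})}$, and then uses exactly $I(0)=k(\beta)$ to evaluate $\beta^2-I(0)$ as an explicit positive multiple of $\beta^p$, which is what produces the stated constant $\beta/\sqrt{l}$; with an arbitrary $\beta$ (possibly as large as $\Phi(0)^{1/2}$) the corresponding quantity can even be negative and the estimate collapses. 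A secondary defect: because you keep the factors $\tfrac12$ inside $\Phi$, your critical point is $\lambda_1=\alpha/\sqrt{2}$, so even the corrected version of your argument yields only $\beta>\alpha/\sqrt{2}$, short of the threshold $\beta>(C_*\sqrt[p]{C_h})^{-\frac{p}{p-2}}=\alpha$ demanded by the statement (the paper is itself loose with these factors of $2$ --- its proof works with the bracket without the halves --- but your version as written does not reach the stated constant).
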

\begin{proof}
According to definition of the functional $I,$
\begin{eqnarray}\label{2.6}
\nonumber I(t)&=& \frac{1}{2}
\|u_{t}(t)\|^{2}_{L^{\frac{n}{2}}(\mathbb{B})} + \frac{1}{2}
\biggl(1- \int_{0}^{t}
g(s)ds\biggr)\|\nabla_{\mathbb{B}}u(t)\|_{L^{\frac{n}{2}}(\mathbb{B})}^{2}
+ \frac{1}{2} (g~ o~ \nabla_{\mathbb{B}}u)(t) \\&-&\nonumber
\frac{1}{p} \int_{\mathbb{B}} h(x)|u|^{p}\frac{dx_{1}}{x_{1}}dx'\geq
\frac{1}{2}\biggl(1- \int_{0}^{t}
g(s)ds\biggr)\|\nabla_{\mathbb{B}}u(t)\|_{L^{\frac{n}{2}}(\mathbb{B})}^{2}
+ \frac{1}{2} (g~ o~ \nabla_{\mathbb{B}}u)(t) \\&-& \frac{1}{p}
\int_{\mathbb{B}} h(x)|u|^{p}\frac{dx_{1}}{x_{1}}dx'.
\end{eqnarray}
On the other hand, by relation \ref{1.3} and H\"{o}lder inequality
we can obtain the following to estimations \ref{2.6}
\begin{eqnarray}\label{2.7}
\nonumber\frac{1}{2}\biggl(1 &-& \int_{0}^{t}
g(s)ds\biggr)\|\nabla_{\mathbb{B}}u(t)\|_{L^{\frac{n}{2}}(\mathbb{B})}^{2}
+ \frac{1}{2} (g~ o~ \nabla_{\mathbb{B}}u)(t) \\&-&\nonumber
\frac{C_{*}^{p}l^{p}
C_{h}}{p}\|u\|^{p}_{\mathcal{H}^{1,\frac{n}{2}}_{2}(\mathbb{B})}
\geq \frac{1}{2}\biggl(1- \int_{0}^{t}
g(s)ds\biggr)\|\nabla_{\mathbb{B}}u(t)\|_{L^{\frac{n}{2}}(\mathbb{B})}^{2}
+ \frac{1}{2} (g~ o~ \nabla_{\mathbb{B}}u)(t) \\&-&
\frac{C_{*}^{p}C_{h}}{p} \biggl[\biggl(1- \int_{0}^{t}
g(s)ds\biggr)\|\nabla_{\mathbb{B}}u(t)\|_{L^{\frac{n}{2}}(\mathbb{B})}^{2}
+ \frac{1}{2} (g~ o~ \nabla_{\mathbb{B}}u)(t)\biggr]^{\frac{p}{2}}.
\end{eqnarray}
If in relation \ref{2.7}, we take $\biggl[\frac{1}{2}\biggl(1-
\int_{0}^{t}
g(s)ds\biggr)\|\nabla_{\mathbb{B}}u(t)\|_{L^{\frac{n}{2}}(\mathbb{B})}^{2}
+ \frac{1}{2} (g~ o~ \nabla_{\mathbb{B}}u)(t)\biggr]^{\frac{1}{2}} =
\theta$ then we have a function
\begin{eqnarray}\label{2.8}
k(\theta)= \frac{1}{2}\theta^{2}  -
\frac{C_{*}^{p}C_{h}}{p}\theta^{p}.
\end{eqnarray}
By simple calculations, one can get
$$\max\limits_{\theta}k(\theta) = k(\alpha),$$ where \begin{equation}\label{2.9}\alpha:=
(\frac{1}{C_{*}^{p}C_{h}})^{\frac{1}{p-2}}.\end{equation} Then for
any $\theta$
\begin{equation}\label{2.10}k(\alpha) =
\biggl(\frac{1}{C_{*}^{p}C_{h}}\biggr)^{\frac{2}{p-2}}
\biggl[\frac{1}{2} - \frac{C_{*}^{p}C_{h}}{p}\biggr] \geq
k(\theta).\end{equation} Hence, for any $0 < \theta < \alpha,$
$k(\theta)$ is increasing and for any $\theta > \alpha ,$
$k(\theta)$ is decreasing. It follows that as $\theta\rightarrow
\infty,$ $k(\theta)\rightarrow -\infty$ and

\begin{equation}\label{2.11}
k(\alpha)= \biggl(\frac{1}{C_{*}^{p}C_{h}}\biggr)^{\frac{2}{p-2}}
\biggl[\frac{1}{2} - \frac{C_{*}^{p}C_{h}}{p}\biggr]= I_{1}
\end{equation}
where $\alpha$ is given by \ref{2.9}. Using of the assumption $I(0)
< I_{1}$ there exists a constant $\beta > \alpha$ for which
$k(\beta) = I(0).$ Now, we set $\alpha_{0} =
\|\nabla_{\mathbb{B}}u_{0}\|_{L^{\frac{n}{2}}(\mathbb{B})},$ then by
\ref{2.8}
\begin{eqnarray}\label{2.12}
\nonumber k(\alpha_{0})&=& \frac{1}{2}\alpha_{0}^{2} -
\frac{C_{*}^{p}C_{h}}{p}\alpha_{0}^{p} = \frac{1}{2}
\|\nabla_{\mathbb{B}}u_{0}\|^{2}_{L^{\frac{n}{2}}(\mathbb{B})} -
\frac{C_{*}^{p}C_{h}}{p}\|\nabla_{\mathbb{B}}u_{0}\|^{p}_{L^{\frac{n}{2}}(\mathbb{B})}\\&\leq&
I(0) = k(\beta).
\end{eqnarray}
It follows that $\alpha_{0} > \beta.$

Now, we use contradiction method to show that relation \ref{l.2.2}
is satisfied. Suppose that there exist some $t_{0} >0$ such that
\begin{eqnarray}\label{2.13}
\biggl[(1 -
\int_{0}^{t_{0}}g(s)ds)\|\nabla_{\mathbb{B}}u\|^{2}_{L^{\frac{n}{2}}(\mathbb{B})}
+ (g~o~\nabla_{\mathbb{B}}u)(t_{0})\biggr]^{\frac{1}{2}} < \beta.
\end{eqnarray}
Since \[\biggl(1 -
\int_{0}^{t_{0}}g(s)ds\biggr)\|\nabla_{\mathbb{B}}u\|^{2}_{L^{\frac{n}{2}}(\mathbb{B})}
+ (g~o~\nabla_{\mathbb{B}}u)(t_{0})\] is continuous, so one can
choose $t_{0}$ for which
\begin{eqnarray}\label{2.14}
\biggl[(1 -
\int_{0}^{t_{0}}g(s)ds)\|\nabla_{\mathbb{B}}u\|^{2}_{L^{\frac{n}{2}}(\mathbb{B})}
+ (g~o~\nabla_{\mathbb{B}}u)(t_{0})\biggr]^{\frac{1}{2}} > \alpha.
\end{eqnarray}
Using of relations \ref{2.8} and \ref{2.10}, one can obtain
\[I(t_{0}) \geq k(\biggl( \biggl[(1 -
\int_{0}^{t_{0}}g(s)ds)\|\nabla_{\mathbb{B}}u\|^{2}_{L^{\frac{n}{2}}(\mathbb{B})}
+ (g~o~\nabla_{\mathbb{B}}u)(t_{0})\biggr]^{\frac{1}{2}} \biggr) >
k(\beta) = I(0).\] But, this is contradiction beacuse $I(t) \leq
I(0)$ for any $t\in[0, T).$ Therefore, the relation \ref{l.2.2} is
satisfied.

Now, we prove the relation \ref{l.2.3}. According to the definition
of the energy functional $I(t)$ and H\"{o}lder inequality we have,
\begin{eqnarray}\label{2.15}
\nonumber\frac{1}{2}\biggl [(1 &-&
\int_{0}^{t_{0}}g(s)ds)\|\nabla_{\mathbb{B}}u\|^{2}_{L^{\frac{n}{2}}(\mathbb{B})}
+ (g~o~\nabla_{\mathbb{B}}u)(t_{0})\biggr]^{\frac{1}{2}} \leq I(0) +
\frac{1}{p}\int_{\mathbb{B}}h(x)|u|^{p}\frac{dx_{1}}{x_{1}}dx'\\&\leq&
I(0)  ~+~
\frac{C_{h}C_{*}^{p}l^{\frac{p}{2}}}{p}\|u\|^{p}_{\mathcal{H}^{1,\frac{n}{2}}_{2}(\mathbb{B})}.
\end{eqnarray}
Therefore,
\begin{eqnarray}\label{2.16}
\nonumber\frac{C_{h}C_{*}^{p}l^{\frac{p}{2}}}{p}\|u\|^{p}_{\mathcal{H}^{1,\frac{n}{2}}_{2,0}(\mathbb{B})}
&\geq& \frac{1}{2}\biggl [(1 -
\int_{0}^{t_{0}}g(s)ds)\|\nabla_{\mathbb{B}}u\|^{2}_{L^{\frac{n}{2}}(\mathbb{B})}
+ (g~o~\nabla_{\mathbb{B}}u)(t_{0})\biggr]^{\frac{1}{2}} \\&-& I(0)
\geq \frac{1}{2}\beta^{2} - I(0) \geq \frac{1}{2}\beta^{2} -
k(\beta) =  \frac{C_{*}^{p}C_{h}}{p}\beta^{p}.
\end{eqnarray}
Hence,
\begin{equation}\label{2.17}\|u\|^{p}_{\mathcal{H}^{1,\frac{n}{2}}_{2,0}(\mathbb{B})} \geq
\frac{\beta^{p}}{l^{\frac{p}{2}}}.\end{equation}
\end{proof}

\begin{lemma}\label{l.3}
Suppose that \ref{2} holds. Then there exists a positive constant $M
$ such that
\begin{equation}\label{2.18}
\|u\|^{s}_{L^{\frac{n}{p}}(\mathbb{B})} \leq
M\biggl(\|\nabla_{\mathbb{B}}u\|^{2}_{L^{\frac{n}{2}}(\mathbb{B})} +
\|u\|^{p}_{L^{\frac{n}{p}}(\mathbb{B})}\biggr),
\end{equation}
for any $u\in \mathcal{H}^{1,\frac{n}{2}}_{2,0}(\mathbb{B})$ and $s
\in [2, p].$
\end{lemma}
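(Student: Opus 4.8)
The plan is to deduce \ref{2.18} from the cone Sobolev embedding and the Poincar\'e inequality already introduced around \ref{1.3}, and then to remove the dependence on the exponent $s\in[2,p]$ by a dichotomy on the size of $\|u\|_{L^{\frac{n}{p}}(\mathbb{B})}$.

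First I would note that condition \ref{2} forces $p\le \frac{2(n-1)}{n-2} < \frac{2n}{n-2}=2^{*}$, so $p$ lies in the subcritical range $[2,2^{*}]$ and the continuous embedding $\mathcal{H}^{1,\frac{n}{2}}_{2,0}(\mathbb{B})\hookrightarrow L^{\frac{n}{p}}(\mathbb{B})$ applies, giving $\|u\|_{L^{\frac{n}{p}}(\mathbb{B})}\le C_{emb}\|u\|_{\mathcal{H}^{1,\frac{n}{2}}_{2,0}(\mathbb{B})}$. Since every $u\in\mathcal{H}^{1,\frac{n}{2}}_{2,0}(\mathbb{B})$ vanishes on $\partial\mathbb{B}$, the Poincar\'e inequality controls the zeroth-order part of the cone norm by the gradient, so that $\|u\|_{\mathcal{H}^{1,\frac{n}{2}}_{2,0}(\mathbb{B})}\le C_{1}\|\nabla_{\mathbb{B}}u\|_{L^{\frac{n}{2}}(\mathbb{B})}$ for a constant $C_{1}$ depending only on $C_{Poin}$. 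Combining the two estimates yields a constant $C_{0}$, which one may take to be $(C_{emb}C_{1})^{2}$, such that
\[
\|u\|^{2}_{L^{\frac{n}{p}}(\mathbb{B})}\le C_{0}\,\|\nabla_{\mathbb{B}}u\|^{2}_{L^{\frac{n}{2}}(\mathbb{B})}.
\]

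Next I would dispose of $s$ by splitting into two cases. If $\|u\|_{L^{\frac{n}{p}}(\mathbb{B})}\ge 1$, then since $s\le p$ we have $\|u\|^{s}_{L^{\frac{n}{p}}(\mathbb{B})}\le\|u\|^{p}_{L^{\frac{n}{p}}(\mathbb{B})}$, which is already dominated by the right-hand side of \ref{2.18}. If instead $\|u\|_{L^{\frac{n}{p}}(\mathbb{B})}\le 1$, then since $s\ge 2$ we have $\|u\|^{s}_{L^{\frac{n}{p}}(\mathbb{B})}\le\|u\|^{2}_{L^{\frac{n}{p}}(\mathbb{B})}$, and the displayed estimate bounds this by $C_{0}\|\nabla_{\mathbb{B}}u\|^{2}_{L^{\frac{n}{2}}(\mathbb{B})}$. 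Taking $M=\max\{1,C_{0}\}$ then covers both cases uniformly in $s\in[2,p]$ and establishes \ref{2.18}. I do not expect a genuine obstacle here: both ingredients are furnished by the hypotheses, and the argument is elementary once they are in hand; the only point demanding care is that the constant $M$ be \emph{uniform} in $s$, which is precisely what the dichotomy on the size of $\|u\|_{L^{\frac{n}{p}}(\mathbb{B})}$ delivers, together with keeping the slightly different normalizations of the cone norms consistent.
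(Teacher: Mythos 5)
Your proof is correct and follows essentially the same route as the paper's: a dichotomy on whether $\|u\|_{L^{\frac{n}{p}}(\mathbb{B})}$ exceeds $1$, with the cone Sobolev embedding and Poincar\'e inequality handling the small-norm case and the trivial monotonicity of powers handling the large-norm case. Your write-up is in fact slightly cleaner, since in the case $\|u\|_{L^{\frac{n}{p}}(\mathbb{B})}\le 1$ the paper keeps the term $\|u\|^{2}_{L^{\frac{n}{p}}(\mathbb{B})}$ and then replaces it by $\|u\|^{p}_{L^{\frac{n}{p}}(\mathbb{B})}$ (an inequality that goes the wrong way for norms below $1$), whereas you bound everything directly by the gradient term.
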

\begin{proof}
We consider two cases. First, if
$\|u\|_{L^{\frac{n}{p}}(\mathbb{B})} \leq 1,$ by Sobolove embedding
Theorem and Poincar\'{e}'s inequality we obatian
\begin{eqnarray*}\|u\|^{s}_{L^{\frac{n}{p}}(\mathbb{B})} \leq
\|u\|^{2}_{L^{\frac{n}{p}}(\mathbb{B})} &\leq& C_{Poin}
\|\nabla_{\mathbb{B}}u\|^{2}_{L^{\frac{n}{p}}(\mathbb{B})} +
 \|u\|^{2}_{L^{\frac{n}{p}}(\mathbb{B})} \\&\leq &
C_{*}l^{\frac{1}{2}}C_{Poin}
\|\nabla_{\mathbb{B}}u\|^{2}_{L^{\frac{n}{2}}(\mathbb{B})} +
 \|u\|^{2}_{L^{\frac{n}{p}}(\mathbb{B})}\\&\leq& C_{*}l^{\frac{1}{2}}C_{Poin}
\|\nabla_{\mathbb{B}}u\|^{2}_{L^{\frac{n}{2}}(\mathbb{B})} +
 \|u\|^{p}_{L^{\frac{n}{p}}(\mathbb{B})}\\&\leq& M \biggl(\|\nabla_{\mathbb{B}}u\|^{2}_{L^{\frac{n}{2}}(\mathbb{B})} +
 \|u\|^{p}_{L^{\frac{n}{p}}(\mathbb{B})}\biggr),
\end{eqnarray*}
where $M > \max\{C_{*}l^{\frac{1}{2}}C_{Poin} ~, 1\}.$ Now, we
assume that $u\in \mathcal{H}^{1,\frac{1}{2}}_{2,0}(\mathbb{B})$
such that $\|u\|_{L^{\frac{n}{p}}(\mathbb{B})} \geq 1,$ then
\[\|u\|^{s}_{L^{\frac{n}{p}}(\mathbb{B})} \leq \|u\|^{p}_{L^{\frac{n}{p}}(\mathbb{B})} \leq
M \biggl(\|\nabla_{\mathbb{B}}u\|^{2}_{L^{\frac{n}{2}}(\mathbb{B})}
+
 \|u\|^{p}_{L^{\frac{n}{p}}(\mathbb{B})}\biggr).\]
\end{proof}
We set $\mathcal{I}(t) = I_{1} - I(t)$ and use positive constant $M$
which depends only on $p$ and $l.$ Here, we prove a result about
function $\mathcal{I}(t)$ by using of Lemma \ref{l.3} and \ref{l.2}.

\begin{proposition}\label{p.1}
Suppose that $u$ is a weak solution of \ref{1.1} and \ref{2} holds.
Then, there exists a positive constant $\delta=\delta(l,p)$ such
that for any $s \in [2 , p]$
\begin{eqnarray}\label{p.1.1}
\|u\|^{s}_{L^{\frac{n}{p}}(\mathbb{B})} \leq
\Lambda\biggl[-2\mathcal{I}(t) -
\|u_{t}\|^{2}_{L^{\frac{n}{2}}(\mathbb{B})} -
(g~o~\nabla_{\mathbb{B}}u)(t)  +
\|u\|^{p}_{L^{\frac{n}{p}}(\mathbb{B})}\biggr]
\end{eqnarray}
where, $\Lambda = \Lambda(M,\delta, C_{h})$ is a positive constant.
\end{proposition}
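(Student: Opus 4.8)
The plan is to peel off the interpolation estimate of Lemma \ref{l.3} and then to show that the bracketed quantity on the right-hand side controls $\|\nabla_{\mathbb{B}}u\|^{2}_{L^{\frac{n}{2}}(\mathbb{B})}+\|u\|^{p}_{L^{\frac{n}{p}}(\mathbb{B})}$ up to a fixed multiplicative constant. First I would invoke Lemma \ref{l.3}, which (needing only \ref{2}) gives, for every $s\in[2,p]$,
\[
\|u\|^{s}_{L^{\frac{n}{p}}(\mathbb{B})}\le M\bigl(\|\nabla_{\mathbb{B}}u\|^{2}_{L^{\frac{n}{2}}(\mathbb{B})}+\|u\|^{p}_{L^{\frac{n}{p}}(\mathbb{B})}\bigr).
\]
Hence it suffices to dominate the right-hand side of this bound by a multiple of the bracket appearing in \ref{p.1.1}.

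The second step is purely algebraic. Writing $\mathcal{I}(t)=I_{1}-I(t)$ and inserting the definition \ref{2.1} of $I(t)$, I would substitute $-2\mathcal{I}(t)=2I(t)-2I_{1}$ and expand $2I(t)$. The kinetic term $\|u_{t}\|^{2}_{L^{\frac{n}{2}}(\mathbb{B})}$ and the memory term $(g\circ\nabla_{\mathbb{B}}u)(t)$ cancel exactly against the two subtracted terms, leaving
\[
-2\mathcal{I}(t)-\|u_{t}\|^{2}_{L^{\frac{n}{2}}(\mathbb{B})}-(g\circ\nabla_{\mathbb{B}}u)(t)+\|u\|^{p}_{L^{\frac{n}{p}}(\mathbb{B})}=\Bigl(1-\int_{0}^{t}g(s)ds\Bigr)\|\nabla_{\mathbb{B}}u\|^{2}_{L^{\frac{n}{2}}(\mathbb{B})}+\|u\|^{p}_{L^{\frac{n}{p}}(\mathbb{B})}-\frac{2}{p}\int_{\mathbb{B}}h|u|^{p}\frac{dx_{1}}{x_{1}}dx'-2I_{1}.
\]
This reduction is routine; the real content is to show that the right-hand side is bounded below by $\delta\bigl(\|\nabla_{\mathbb{B}}u\|^{2}_{L^{\frac{n}{2}}(\mathbb{B})}+\|u\|^{p}_{L^{\frac{n}{p}}(\mathbb{B})}\bigr)$ for some $\delta=\delta(l,p)>0$.

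For this last estimate I would use the structural hypotheses. Assumption $A_{1}$ replaces the coefficient $1-\int_{0}^{t}g(s)ds$ by its lower bound $l$, so that the gradient contribution is at least $l\,\|\nabla_{\mathbb{B}}u\|^{2}_{L^{\frac{n}{2}}(\mathbb{B})}$, while $A_{4}$ gives $\int_{\mathbb{B}}h|u|^{p}\frac{dx_{1}}{x_{1}}dx'\le C_{h}\|u\|^{p}_{L^{\frac{n}{p}}(\mathbb{B})}$, which lets the source contribution $-\tfrac{2}{p}\int_{\mathbb{B}}h|u|^{p}$ be traded against a fraction of $\|u\|^{p}_{L^{\frac{n}{p}}(\mathbb{B})}$ (this is where the $C_{h}$-dependence of $\Lambda$ enters). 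The only genuinely negative term left is the fixed constant $-2I_{1}$. To absorb it I would exploit the lower bound \ref{l.2.2} of Lemma \ref{l.2} together with the identity $I_{1}=\frac{p-2}{2p}\alpha^{2}$ that follows from \ref{2.9} and \ref{2.11}: since $\beta>\alpha$, inequality \ref{l.2.2} yields $2\alpha^{2}<\bigl(1-\int_{0}^{t}g\bigr)\|\nabla_{\mathbb{B}}u\|^{2}+(g\circ\nabla_{\mathbb{B}}u)(t)$, hence $2I_{1}$ is dominated by a strictly sub-unit multiple of the gradient energy already present. Collecting the surviving positive coefficients into $\delta=\delta(l,p)$ gives the desired lower bound, and the proposition follows with $\Lambda=\Lambda(M,\delta,C_{h})$, e.g.\ $\Lambda$ of order $M/\delta$ corrected by the $C_{h}$ factor.

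The \emph{main obstacle} is precisely the absorption of $-2I_{1}$. The usable lower bound from Lemma \ref{l.2} mixes $\bigl(1-\int_{0}^{t}g\bigr)\|\nabla_{\mathbb{B}}u\|^{2}$ with the memory term $(g\circ\nabla_{\mathbb{B}}u)(t)$, yet the latter has already cancelled out of the bracket; naively substituting reintroduces $(g\circ\nabla_{\mathbb{B}}u)(t)$ with an unfavourable sign. One must therefore charge $2I_{1}$ only against the portion of the gradient energy that genuinely survives, keeping enough of $\frac{p+2}{2p}\,l\,\|\nabla_{\mathbb{B}}u\|^{2}$ in reserve so that a strictly positive multiple of $\|\nabla_{\mathbb{B}}u\|^{2}+\|u\|^{p}$ remains. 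It is this balancing — between the gradient term, the source term controlled by $C_{h}$, and the constant $I_{1}$ — that pins down the admissible $\delta$, and hence $\Lambda$.
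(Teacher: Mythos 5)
Your first two steps are fine: invoking Lemma \ref{l.3} and then expanding the bracket is exactly right, and your algebra is correct --- the kinetic and memory terms cancel, leaving
$\bigl(1-\int_{0}^{t}g(s)ds\bigr)\|\nabla_{\mathbb{B}}u\|^{2}_{L^{\frac{n}{2}}(\mathbb{B})}+\|u\|^{p}_{L^{\frac{n}{p}}(\mathbb{B})}-\frac{2}{p}\int_{\mathbb{B}}h|u|^{p}\frac{dx_{1}}{x_{1}}dx'-2I_{1}$.
But the absorption of $-2I_{1}$, which you yourself single out as the main obstacle, is never actually carried out, and it \emph{cannot} be carried out with the tool you chose. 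Inequality \ref{l.2.2} bounds only the \emph{sum} $\bigl(1-\int_{0}^{t}g\bigr)\|\nabla_{\mathbb{B}}u\|^{2}+(g~o~\nabla_{\mathbb{B}}u)(t)$ from below; nothing prevents the memory term from carrying essentially all of that mass, in which case there is no ``portion of the gradient energy that genuinely survives'' to charge $2I_{1}$ against. Since $(g~o~\nabla_{\mathbb{B}}u)(t)$ has already cancelled out of the bracket, re-importing it through \ref{l.2.2} leaves a term $-c\,(g~o~\nabla_{\mathbb{B}}u)(t)$ with nothing positive to compensate it, and your argument stops there. The missing ingredient is the \emph{other} conclusion of Lemma \ref{l.2}, namely \ref{l.2.3} (equivalently \ref{2.17}): $\|u\|_{\mathcal{H}^{1,\frac{n}{2}}_{2,0}(\mathbb{B})}\geq\beta/\sqrt{l}$, where the paper identifies $\|u\|_{\mathcal{H}^{1,\frac{n}{2}}_{2}(\mathbb{B})}$ with $\|\nabla_{\mathbb{B}}u\|_{L^{\frac{n}{2}}(\mathbb{B})}$ --- a lower bound on the gradient norm \emph{alone}. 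This is exactly what the paper does in \ref{2.20}: it writes $I_{1}=\alpha^{2}\bigl[\tfrac12-\tfrac{C_{*}^{p}C_{h}}{p}\bigr]\leq\beta^{2}\bigl[\tfrac12-\tfrac{C_{*}^{p}C_{h}}{p}\bigr]\leq\tfrac1l\bigl[\tfrac12-\tfrac{C_{*}^{p}C_{h}}{p}\bigr]\|\nabla_{\mathbb{B}}u\|^{2}_{L^{\frac{n}{2}}(\mathbb{B})}$, so that the constant $I_{1}$ becomes a multiple of the gradient energy already present, and the rearrangement in \ref{2.22} is what produces $\delta(l,p)=\frac{2C_{*}^{p}C_{h}-p(l^{2}-l+1)}{2lp}$.

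There is a second defect specific to your ordering of the argument. Because you apply Lemma \ref{l.3} \emph{first}, you must bound the whole bracket below by $\delta\bigl(\|\nabla_{\mathbb{B}}u\|^{2}_{L^{\frac{n}{2}}(\mathbb{B})}+\|u\|^{p}_{L^{\frac{n}{p}}(\mathbb{B})}\bigr)$, and after using $A_{4}$ the coefficient of $\|u\|^{p}_{L^{\frac{n}{p}}(\mathbb{B})}$ is $1-\frac{2C_{h}}{p}$, which no hypothesis makes positive ($C_{h}<p/2$ is nowhere assumed). The paper's arrangement sidesteps this entirely: it never bounds the bracket from below, but instead isolates the gradient term, obtaining $\|\nabla_{\mathbb{B}}u\|^{2}_{L^{\frac{n}{2}}(\mathbb{B})}\leq\frac1\delta\bigl[-\mathcal{I}(t)-\tfrac12\|u_{t}\|^{2}_{L^{\frac{n}{2}}(\mathbb{B})}-\tfrac12(g~o~\nabla_{\mathbb{B}}u)(t)+\tfrac{C_{h}}{p}\|u\|^{p}_{L^{\frac{n}{p}}(\mathbb{B})}\bigr]$ (relation \ref{2.22}), so the source term sits with a favourable sign on the majorizing side and is simply folded into $\Lambda$ together with $M$ when Lemma \ref{l.3} is applied afterwards. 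To repair your proof: keep the expansion of the bracket, but replace the appeal to \ref{l.2.2} by \ref{l.2.3}/\ref{2.17}, and reorganize so that the $C_{h}$-term never has to be dominated by $\|u\|^{p}_{L^{\frac{n}{p}}(\mathbb{B})}$.
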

\begin{proof}
According to assumption $A_{1}$ and relation \ref{2.1} one can get

\begin{eqnarray}\label{2.19}
\nonumber\frac{1}{2}(1 -
l)\|\nabla_{\mathbb{B}}u\|_{L^{\frac{n}{2}}(\mathbb{B})}^{2} &\leq&
\frac{1}{2}(1 - \int_{0}^{t}
g(s)ds)\|\nabla_{\mathbb{B}}u\|_{L^{\frac{n}{2}}(\mathbb{B})}^{2}
\nonumber \\ &\leq& I(t) -
\frac{1}{2}\|u_{t}\|_{L^{\frac{n}{2}}(\mathbb{B})}^{2} -
\frac{1}{2}(g~o~\nabla_{\mathbb{B}}u)(t) + \frac{1}{p}
\int_{\mathbb{B}}h(x)|u|^{p}\frac{dx_{1}}{x_{1}}dx' \nonumber \\
&\leq& I_{1} - \mathcal{I}(t) - \frac{1}{2}
\|u_{t}\|^{2}_{L^{\frac{n}{2}}(\mathbb{B})} - \frac{1}{2}
(g~o~\nabla_{\mathbb{B}} u)(t) \nonumber\\&+& \frac{1}{p}
\int_{\mathbb{B}}h(x) |u|^{p}\frac{dx_{1}}{x_{1}} dx' \nonumber \\
&\leq& I_{1} - \mathcal{I}(t) - \frac{1}{2}
\|u_{t}\|^{2}_{L^{\frac{n}{2}}(\mathbb{B})} - \frac{1}{2}
(g~o~\nabla_{\mathbb{B}} u)(t) +
\frac{C_{h}}{p}\|u\|^{p}_{L^{\frac{n}{p}}(\mathbb{B})}.
\end{eqnarray}
On the other hand, by relations \ref{2.11} and \ref{2.17}, we obtain
the following estiamtion for $I_{1},$
\begin{eqnarray}\label{2.20}
\nonumber I_{1} &=& \biggl(\frac{1}{C_{*}^{p}
C_{h}}\biggr)^{\frac{2}{p-2}}\biggl[\frac{1}{2} - \frac{C_{*}^{p}
C_{h}}{p}\biggr] \leq \beta^{2} \biggl[\frac{1}{2} - \frac{C_{*}^{p}
C_{h}}{p}\biggr] \nonumber\\&\leq& \frac{1}{l} \biggl[\frac{1}{2} -
\frac{C_{*}^{p}
C_{h}}{p}\biggr]\|u\|^{2}_{\mathcal{H}^{1,\frac{n}{2}}_{2}(\mathbb{B})}
\nonumber\\&=& \frac{1}{l}\biggl[\frac{1}{2} - \frac{C_{*}^{p}
C_{h}}{p}\biggr] \|\nabla_{\mathbb{B}}
u\|^{2}_{L^{\frac{n}{2}}(\mathbb{B})}.
\end{eqnarray}
From \ref{2.20}, we get
\begin{eqnarray}\label{2.21}
\nonumber\frac{1}{2}(1 -
l)\|\nabla_{\mathbb{B}}u\|_{L^{\frac{n}{2}}(\mathbb{B})}^{2} &\leq&
\frac{1}{l}\biggl[\frac{1}{2} - \frac{C_{*}^{p} C_{h}}{p}\biggr]
\|\nabla_{\mathbb{B}} u\|^{2}_{L^{\frac{n}{2}}(\mathbb{B})} \\&-&
\mathcal{I}(t) - \frac{1}{2}
\|u_{t}\|^{2}_{L^{\frac{n}{2}}(\mathbb{B})} -
\frac{1}{2}(g~o~\nabla_{\mathbb{B}} u)(t) +
\frac{C_{h}}{p}\|u\|^{p}_{L^{\frac{n}{p}}(\mathbb{B})}.
\end{eqnarray}
Hence, by relation \ref{2.21}, one can get
\begin{eqnarray*}
\biggl[\frac{2C_{*}^{p}C_{h} - p(l^{2} -l +
1)}{2lp}\biggr]\|\nabla_{\mathbb{B}}
u\|^{2}_{L^{\frac{n}{2}}(\mathbb{B})} &\leq& -\mathcal{I}(t) -
\frac{1}{2}\|u_{t}\|^{2}_{L^{\frac{n}{2}}(\mathbb{B})} - \frac{1}{2}
(g~o~\nabla_{\mathbb{B}} u)(t) + \frac{C_{h}}{p}
\|u\|^{p}_{L^{\frac{n}{p}}(\mathbb{B})}.
\end{eqnarray*}
Then,
\begin{eqnarray}\label{2.22}
\|\nabla_{\mathbb{B}} u\|^{2}_{L^{\frac{n}{2}}(\mathbb{B})} &\leq&
\frac{1}{\delta(l,p)} \biggl[- \mathcal{I}(t) -
\frac{1}{2}\|u_{t}\|^{2}_{L^{\frac{n}{2}}(\mathbb{B})} - \frac{1}{2}
(g~o~\nabla_{\mathbb{B}} u)(t) + \frac{C_{h}}{p}
\|u\|^{p}_{L^{\frac{n}{p}}(\mathbb{B})}\biggr],
\end{eqnarray}
where $\delta=\delta(l,p) = \frac{2C_{*}^{p}C_{h} - p(l^{2} -l +
1)}{2lp}.$

Now, we apply Lemma \ref{l.3}, then
\begin{eqnarray}\label{2.23}
\nonumber\|u\|^{s}_{L^{\frac{n}{p}}(\mathbb{B})} &\leq&
\frac{2}{\delta M}\biggl[ - 2\mathcal{I}(t)  -
\|u_{t}\|^{2}_{L^{\frac{n}{2}}(\mathbb{B})} -
(g~o~\nabla_{\mathbb{B}} u)(t)\biggr] \nonumber\\&+& \biggl(M +
\frac{C_{h}}{\delta M p}\biggr)
\|u\|^{p}_{L^{\frac{n}{p}}(\mathbb{B})}.
\end{eqnarray}
It follows from \ref{2.23} that
\begin{eqnarray}\label{2.24}
\|u\|^{s}_{L^{\frac{n}{p}}(\mathbb{B})} &\leq&
\Lambda\biggl[-2\mathcal{I}(t) -
\|u_{t}\|^{2}_{L^{\frac{n}{2}}(\mathbb{B})} -
(g~o~\nabla_{\mathbb{B}} u)(t) +
\|u\|^{p}_{L^{\frac{n}{p}}(\mathbb{B})}\biggr],
\end{eqnarray}
where
\begin{equation}\label{2.25}
\Lambda = \Lambda(M,\delta, C_{h}) = \max\{\frac{2}{\delta M} , M +
\frac{C_{h}}{\delta pM}\}
\end{equation}

\section{proof of Main results}

{\bf Proof of Theorem \ref{th.1}}

According to relation \ref{2.1} and definition of $\mathcal{I}(t),$
we get that
\begin{eqnarray}\label{2.26}
\nonumber 0 < \mathcal{I}(0) &=& I_{1} - I(0) \leq I_{1} - I(t) =
\mathcal{I}(t) \nonumber\\&=& I_{1} -
\biggl[\frac{1}{2}\|u_{t}\|^{2}_{L^{\frac{n}{2}}(\mathbb{B})} +
\frac{1}{2} (1 - \int_{0}^{t}g(s)ds)\|\nabla_{\mathbb{B}}
u\|^{2}_{L^{\frac{n}{2}}(\mathbb{B})} \nonumber\\&+&
\frac{1}{2}(g~o~\nabla_{\mathbb{B}} u)(t) -
\frac{1}{p}\int_{\mathbb{B}}h(x)|u|^{p}\frac{dx_{1}}{x_{1}}dx'\biggr]
\leq I_{1} -
\frac{1}{2}\biggl[\|u_{t}\|_{L^{\frac{n}{2}}(\mathbb{B})}^{2}
\nonumber\\&+& \frac{1}{2} (1 -
\int_{0}^{t}g(s)ds)\|\nabla_{\mathbb{B}}
u\|^{2}_{L^{\frac{n}{2}}(\mathbb{B})} +
\frac{1}{2}(g~o~\nabla_{\mathbb{B}} u)(t) \biggr] +
\frac{C_{h}}{p}\|u\|^{p}_{L^{\frac{n}{p}}(\mathbb{B})}.
\end{eqnarray}
From Lemma \ref{l.2}, for any $t\in [0, \infty),$
\begin{eqnarray}\label{2.27}
\nonumber I_{1} &-&
\frac{1}{2}\biggl[\|u_{t}\|_{L^{\frac{n}{2}}(\mathbb{B})}^{2} +
\frac{1}{2} (1 - \int_{0}^{t}g(s)ds)\|\nabla_{\mathbb{B}}
u\|^{2}_{L^{\frac{n}{2}}(\mathbb{B})} +
\frac{1}{2}(g~o~\nabla_{\mathbb{B}} u)(t) \biggr] \nonumber\\&<&
\alpha^{2}\biggl(\frac{1}{2} - \frac{C_{*}^{p}C_{h}}{p}\biggr) -
\frac{1}{2}\beta^{2} = - \frac{\beta^{2}C_{*}^{p}C_{h}}{p} < 0.
\end{eqnarray}

Therefore,
\begin{eqnarray}\label{2.28}
0 < \mathcal{I}(0) \leq \mathcal{I}(t) \leq
\frac{C_{h}}{p}\|u\|^{p}_{L^{\frac{n}{p}}(\mathbb{B})} \hspace{1
cm}\forall t\geq 0.
\end{eqnarray}
Now, we consider $$0 < \epsilon \leq
\frac{m(1-k)C_{*}C_{Poin}^{m}}{N(m-1)},$$ and define
\begin{equation}\label{2.29}
\mathcal{F}(t) := \mathcal{I}^{1-k}(t) + \epsilon\int_{\mathbb{B}} u
u_{t}\frac{dx_{1}}{x_{1}}dx',
\end{equation}
such that \[0 < k\leq \min\biggl \{\frac{p-2}{2p} ,
\frac{p-m}{p(m-1)}\biggr\}.\]Moreover, $$\mathcal{F}(0)=
\mathcal{I}^{1-k}(0) +
\epsilon\int_{\mathbb{B}}u_{0}(x)~u_{1}(x)\frac{dx_{1}}{x_{1}}dx'
> 0 .$$ We take a derivative from \ref{2.29} and use \ref{1.1}, then
\begin{eqnarray}\label{2.30}
\nonumber\mathcal{F}^{'}(t) &=& (1-k)\mathcal{I}^{-k}(t)
\mathcal{I}^{'}(t) + \epsilon\int_{\mathbb{B}}[ u_{t}^{2} + u u_{tt}
]\frac{dx_{1}}{x_{1}}dx'\nonumber\\&=&
(1-k)\mathcal{I}^{-k}(t)\biggl[\int_{\mathbb{B}}f(x)|u_{t}|^{m}\frac{dx_{1}}{x_{1}}dx'
-  \frac{1}{2}\int_{0}^{t}
g'(t-\tau)\int_{\mathbb{B}}|\nabla_{\mathbb{B}}u(t) -
\nabla_{\mathbb{B}}u(\tau)|^{2} \frac{dx_{1}}{x_{1}}dx' d\tau
\nonumber\\&+& \frac{1}{2}g(t)\|\nabla_{\mathbb{B}}
u\|_{L^{\frac{n}{2}}(\mathbb{B})}^{2} \biggr]\nonumber\\&+&
\epsilon\int_{\mathbb{B}}\biggl[ u_{t}^{2} + h(x)|u|^{p} -
f(x)u_{t}|u_{t}|^{m-2}u ~-~ \int_{0}^{t}g(t-\tau)
\Delta_{\mathbb{B}} u(\tau) u d\tau + (\Delta_{\mathbb{B}}u)u\biggr]
\frac{dx_{1}}{x_{1}}dx'\nonumber\\&=&(1-k)\mathcal{I}^{-k}(t)\biggl[\int_{\mathbb{B}}f(x)|u_{t}|^{m}\frac{dx_{1}}{x_{1}}dx'
-  \frac{1}{2}\int_{0}^{t}
g'(t-\tau)\int_{\mathbb{B}}|\nabla_{\mathbb{B}}u(t) -
\nabla_{\mathbb{B}}u(\tau)|^{2} \frac{dx_{1}}{x_{1}}dx' d\tau
\nonumber\\&+&\frac{1}{2}g(t)\|\nabla_{\mathbb{B}}
u\|_{L^{\frac{n}{2}}(\mathbb{B})}^{2} \biggr] +
\epsilon\int_{\mathbb{B}}[~ u_{t}^{2}  - |\nabla_{\mathbb{B}}
u|^{2}~] \frac{dx_{1}}{x_{1}}dx' \nonumber\\&+&
\epsilon\int_{0}^{t}g(t-\tau)\int_{\mathbb{B}}\nabla_{\mathbb{B}}
u(t) . \nabla_{\mathbb{B}} u(\tau) \frac{dx_{1}}{x_{1}}dxd\tau +
\epsilon\int_{\mathbb{B}}h(x)|u|^{p} \frac{dx_{1}}{x_{1}}dx'
\nonumber\\&-& \epsilon\int_{\mathbb{B}}f(x)|u_{t}|^{m-2} u_{t} u
\frac{dx_{1}}{x_{1}}dx'.
\end{eqnarray}
Therefore,
\begin{eqnarray}\label{2.31}
\mathcal{F}^{'}(t) &\geq&
(1-k)\mathcal{I}^{-k}(t)\int_{\mathbb{B}}f(x)|u_{t}|^{m}
\frac{dx_{1}}{x_{1}}dx' + \epsilon\int_{\mathbb{B}}[~ u_{t}^{2}  -
|\nabla_{\mathbb{B}} u|^{2}~] \frac{dx_{1}}{x_{1}}dx'
\nonumber\\&+&\epsilon\int_{\mathbb{B}}h(x)|u|^{p}\frac{dx_{1}}{x_{1}}dx'
~-~ \epsilon\int_{\mathbb{B}}f(x)|u_{t}|^{m-2}u_{t} u
\frac{dx_{1}}{x_{1}}dx' +
\epsilon\int_{0}^{t}g(t-\tau)\|\nabla_{\mathbb{B}}u\|^{2}_{L^{\frac{n}{2}}(\mathbb{B})}d\tau
\nonumber\\&+&\epsilon\int_{0}^{t}g(t-\tau)\int_{\mathbb{B}}\nabla_{\mathbb{B}}
u(t) .[~\nabla_{\mathbb{B}} u(\tau) - \nabla_{\mathbb{B}}
u(t)~]\frac{dx_{1}}{x_{1}}dx'd\tau.
\end{eqnarray}
Now, we apply the Schwartz inequality, then we obatin
\begin{eqnarray}\label{2.32}
\mathcal{F}^{'}(t) &\geq&
(1-k)\mathcal{I}^{-k}(t)\int_{\mathbb{B}}f(x)|u_{t}|^{m}
\frac{dx_{1}}{x_{1}}dx' + \epsilon\int_{\mathbb{B}}[~ u_{t}^{2}  -
|\nabla_{\mathbb{B}} u|^{2}~] \frac{dx_{1}}{x_{1}}dx'
\nonumber\\&+&\epsilon\int_{\mathbb{B}}h(x)|u|^{p}\frac{dx_{1}}{x_{1}}dx'
~-~ \epsilon\int_{\mathbb{B}}f(x)|u_{t}|^{m-2}u_{t} u
\frac{dx_{1}}{x_{1}}dx' +
\epsilon\int_{0}^{t}g(t-\tau)\|\nabla_{\mathbb{B}}u\|^{2}_{L^{\frac{n}{2}}(\mathbb{B})}d\tau\nonumber
\\&-&\epsilon\int_{0}^{t}g(t-\tau)
\|\nabla_{\mathbb{B}}u\|_{L^{\frac{n}{2}}(\mathbb{B})}
~\|\nabla_{\mathbb{B}} u(\tau) - \nabla_{\mathbb{B}}
u(t)\|_{L^{\frac{n}{2}}(\mathbb{B})} d\tau.
\end{eqnarray}
On the other hand, we utilize Young's inequality to estimate the
last term on the right hand side of \ref{2.32} and use the
definition of $I(t)$ to substitue for
$\int_{\mathbb{B}}h(x)|u|^{p}\frac{dx_{1}}{x_{1}}dx'.$ Hence,
\begin{eqnarray}\label{2.33}
\mathcal{F}^{'}(t) &\geq&
(1-k)\mathcal{I}^{-k}(t)\int_{\mathbb{B}}f(x)|u_{t}|^{m}
\frac{dx_{1}}{x_{1}}dx' + \epsilon\int_{\mathbb{B}}u_{t}^{2}
\frac{dx_{1}}{x_{1}}dx' ~-~\epsilon\biggl(1 - \int_{0}^{t}
g(s)ds\biggr)\|\nabla_{\mathbb{B}}
u\|^{2}_{L^{\frac{n}{2}}(\mathbb{B})}\nonumber\\&+&\epsilon\biggl(
\frac{p}{2}\|u_{t}\|^{2}_{L^{\frac{n}{2}}(\mathbb{B})} +
\frac{p}{2}(g~o\nabla_{\mathbb{B}} u)(t) + \frac{p}{2}[1 -
\int_{0}^{t} g(s)ds] \|\nabla_{\mathbb{B}}
u\|^{2}_{L^{\frac{n}{2}}(\mathbb{B})} - p
I(t)\biggr)\nonumber\\&-&\epsilon\int_{\mathbb{B}}f(x)|u_{t}|^{m-2}u_{t}
u \frac{dx_{1}}{x_{1}}dx' - \epsilon\tau(g~o\nabla_{\mathbb{B}}u)(t)
-\frac{\epsilon}{4\tau}\int_{0}^{t}g(s)ds\|\nabla_{\mathbb{B}}u(t)\|^{2}_{L^{\frac{n}{2}}(\mathbb{B})}\nonumber
\\&\geq& (1-k)\mathcal{I}^{-k}(t)\int_{\mathbb{B}}f(x)|u_{t}|^{m}
\frac{dx_{1}}{x_{1}}dx' + \epsilon\int_{\mathbb{B}}u_{t}^{2}
\frac{dx_{1}}{x_{1}}dx' ~-~\epsilon\biggl(1 - \int_{0}^{t}
g(s)ds\biggr)\|\nabla_{\mathbb{B}}
u\|^{2}_{L^{\frac{n}{2}}(\mathbb{B})}\nonumber\\&+&\epsilon\biggl(
\frac{p}{2}\|u_{t}\|^{2}_{L^{\frac{n}{2}}(\mathbb{B})} +
\frac{p}{2}(g~o\nabla_{\mathbb{B}} u)(t) + \frac{p}{2}[1 -
\int_{0}^{t} g(s)ds] \|\nabla_{\mathbb{B}}
u\|^{2}_{L^{\frac{n}{2}}(\mathbb{B})} +p\mathcal{I}(t) -
pI_{1}\biggr)\nonumber\\&-&\epsilon\int_{\mathbb{B}}f(x)|u_{t}|^{m-2}u_{t}
u \frac{dx_{1}}{x_{1}}dx' - \epsilon\tau(g~o\nabla_{\mathbb{B}}u)(t)
-\frac{\epsilon}{4\tau}\int_{0}^{t}g(s)ds\|\nabla_{\mathbb{B}}u(t)\|^{2}_{L^{\frac{n}{2}}(\mathbb{B})}
\nonumber\\&\geq&
(1-k)\mathcal{I}^{-k}(t)\int_{\mathbb{B}}f(x)|u_{t}|^{m}
\frac{dx_{1}}{x_{1}}dx' ~+~\epsilon(1 +
\frac{p}{2})\int_{\mathbb{B}}u_{t}^{2}~\frac{dx_{1}}{x_{1}}dx'~+~\epsilon
p \mathcal{I}(t)\nonumber\\&+&\epsilon(\frac{p}{2} -
\tau)~(g~o\nabla_{\mathbb{B}} u)(t) -
\epsilon\int_{\mathbb{B}}f(x)|u_{t}|^{m-2} u_{t} u
~\frac{dx_{1}}{x_{1}}dx' \nonumber\\&+&\epsilon\biggl[(\frac{p}{2} -
1) - (\frac{p}{2} - 1 +
\frac{1}{4\tau})\int_{0}^{\infty}g(s)ds\biggr]\|\nabla_{\mathbb{B}}
u\|^{2}_{L^{\frac{n}{2}}(\mathbb{B})},
\end{eqnarray}
for some $0 < \tau < \frac{p}{2}.$

Therefore,
\begin{eqnarray}\label{2.34}
\mathcal{F}^{'}(t)&\geq&
(1-k)\mathcal{I}^{-k}(t)\int_{\mathbb{B}}f(x)|u_{t}|^{m}
\frac{dx_{1}}{x_{1}}dx' ~+~\epsilon(1 +
\frac{p}{2})\int_{\mathbb{B}}u_{t}^{2}~\frac{dx_{1}}{x_{1}}dx'~+~\epsilon
p \mathcal{I}(t)\nonumber\\&+&\epsilon M_{1}(g~o\nabla_{\mathbb{B}}
u)(t)  + \epsilon M_{2}\|\nabla_{\mathbb{B}}
u\|^{2}_{L^{\frac{n}{2}}(\mathbb{B})} -
\epsilon\int_{\mathbb{B}}f(x)|u_{t}|^{m-2}u_{t} u
~\frac{dx_{1}}{x_{1}}dx',
\end{eqnarray}
where $M_{1}:= \frac{p}{2} - \tau$ and $M_{2}:= (\frac{p}{2} - 1) -
(\frac{p}{2} - 1 + \frac{1}{4\tau})\int_{0}^{\infty}g(s)ds$ are
positive constants.

To estimate of the last term in \ref{2.34}, we exploit Young's
inequality as follows:
\begin{eqnarray}\label{2.35}
\int_{\mathbb{B}}f(x)|u_{t}|^{m-2} u_{t} u ~\frac{dx_{1}}{x_{1}}dx'
&\leq& \int_{\mathbb{B}}|f(x)| ~|u|~
|u_{t}|^{m-1}\frac{dx_{1}}{x_{1}}dx'
\nonumber\\&\leq&\biggl[\frac{\theta^{m}C_{f}^{m}}{m}\int_{\mathbb{B}}
|u|^{m}~\frac{dx_{1}}{x_{1}}dx' +
\frac{m-1}{m}\theta^{-\frac{m}{m-1}}\int_{\mathbb{B}}|u_{t}|^{m}~\frac{dx_{1}}{x_{1}}dx'\biggr]
\nonumber\\&\leq&
\biggl[\frac{\theta^{m}C_{f}^{m}}{m}\|u\|^{m}_{L^{\frac{n}{m}}(\mathbb{B})}
+
\frac{m-1}{m}\theta^{-\frac{m}{m-1}}\|u_{t}\|^{m}_{L^{\frac{n}{m}}(\mathbb{B})}\biggr].
\end{eqnarray}
Now, we insert \ref{2.35} in \ref{2.34}, then we get
\begin{eqnarray}\label{2.36}
\mathcal{F}^{'}(t) &\geq&
(1-k)\mathcal{I}^{-k}(t)\int_{\mathbb{B}}f(x)|u_{t}|^{m}\frac{dx_{1}}{x_{1}}dx'
+ \epsilon(1 +
\frac{p}{2})\int_{\mathbb{B}}u^{2}_{t}(x,t)\frac{dx_{1}}{x_{1}}dx'
\nonumber\\&+& \epsilon p \mathcal{I}(t) + \epsilon
M_{1}(g~o~\nabla_{\mathbb{B}}u)(t)  + \epsilon
M_{2}\|\nabla_{\mathbb{B}} u\|^{2}_{L^{\frac{n}{2}}(\mathbb{B})} -
\frac{\epsilon \theta^{m}
C_{f}^{m}}{m}\|u\|^{m}_{L^{\frac{n}{m}}(\mathbb{B})} \nonumber\\&-&
\frac{\epsilon
(m-1)\theta^{-\frac{m}{m-1}}}{m}\|u_{t}\|^{m}_{L^{\frac{n}{m}}(\mathbb{B})}.
\end{eqnarray}
Sine our itegral is taken over the variable $x=(x_{1},x^{'}),$ we
can consider the parametre $\theta.$ Hence, we consider
$\theta^{-\frac{m}{m-1}} = N ~\mathcal{I}^{-k}(t)$ for large enough
$N.$ Now, we apply this equality in relation \ref{2.36} and by
relation \ref{1}  obtain
\begin{eqnarray}\label{2.37}
\mathcal{F}^{'}(t) &\geq& (1-k)\mathcal{I}^{-k}(t) C^{*}
\|\nabla_{\mathbb{B}} u_{t}\|^{m}_{L^{\frac{n}{m}}(\mathbb{B})} +
\epsilon(1 +
\frac{p}{2})\int_{\mathbb{B}}u^{2}_{t}(x,t)\frac{dx_{1}}{x_{1}}dx'
\nonumber\\&+& \epsilon p \mathcal{I}(t) + \epsilon
M_{1}(g~o~\nabla_{\mathbb{B}}u)(t)  + \epsilon
M_{2}\|\nabla_{\mathbb{B}} u\|^{2}_{L^{\frac{n}{2}}(\mathbb{B})} -
~\frac{\epsilon N^{1-m}~C_{f}^{m}}{m}\mathcal{I}^{k(m-1)}(t)
\|u\|^{m}_{L^{\frac{n}{m}}(\mathbb{B})} \nonumber\\&-&
\frac{\epsilon
(m-1)N}{m}\mathcal{I}^{-k}(t)\|u_{t}\|^{m}_{L^{\frac{n}{m}}(\mathbb{B})}
= (1-k)\mathcal{I}^{-k}(t) C^{*} C_{Poin}^{m}\|
u_{t}\|^{m}_{L^{\frac{n}{m}}(\mathbb{B})} \nonumber\\&+& \epsilon(1
+ \frac{p}{2})\int_{\mathbb{B}}u^{2}_{t}(x,t)\frac{dx_{1}}{x_{1}}dx'
\nonumber\\&+& \epsilon p \mathcal{I}(t) + \epsilon
M_{1}(g~o~\nabla_{\mathbb{B}}u)(t)  + \epsilon
M_{2}\|\nabla_{\mathbb{B}} u\|^{2}_{L^{\frac{n}{2}}(\mathbb{B})} -
~\frac{\epsilon N^{1-m}~C_{f}^{m}}{m}\mathcal{I}^{k(m-1)}(t)
\|u\|^{m}_{L^{\frac{n}{m}}(\mathbb{B})} \nonumber\\&-&
\frac{\epsilon
(m-1)N}{m}\mathcal{I}^{-k}(t)\|u_{t}\|^{m}_{L^{\frac{n}{m}}(\mathbb{B})}
\nonumber\\&=&\biggr[(1-k)C^{*}C_{Poin}^{m} - \frac{\epsilon N
(m-1)}{m}\biggr]\mathcal{I}^{-k}(t)\|u_{t}\|^{m}_{L^{\frac{n}{m}}(\mathbb{B})}
+ \epsilon(1 +
\frac{p}{2})\int_{\mathbb{B}}u^{2}_{t}(x,t)\frac{dx_{1}}{x_{1}}dx'
\nonumber\\&+& \epsilon ~p~ \mathcal{I}(t) ~+~ \epsilon~
M_{1}(g~o~\nabla_{\mathbb{B}}u)(t)  ~+~ \epsilon~
M_{2}\|\nabla_{\mathbb{B}}
u\|^{2}_{L^{\frac{n}{2}}(\mathbb{B})}\nonumber\\&+&
\epsilon\biggl[p~\mathcal{I}(t)  ~-~ \frac{N^{1-m}C_{f}^{m}}{m}
\mathcal{I}^{k(m-1)}(t)
\|u\|^{m}_{L^{\frac{n}{m}}(\mathbb{B})}\biggr].
\end{eqnarray}
Now, we utilize relation \ref{2.28} and embedding inequality
$\|u\|^{m}_{L^{\frac{n}{m}}(\mathbb{B})} \leq
C_{emb}\|u\|^{m}_{L^{\frac{n}{p}}(\mathbb{B})},$
\begin{equation*}
\mathcal{I}^{k(m-1)}(t)\|u\|^{m}_{L^{\frac{n}{m}}(\mathbb{B})} \leq
(\frac{C_{h}}{p})^{k(m-1)}C_{emb}\|u\|^{m+kp(m-1)}_{L^{\frac{n}{p}}(\mathbb{B})}.
\end{equation*}
Therefore, it follows from \ref{2.37} and Lemma \ref{l.3} for $s = m
+ kp(m-1) \leq p,$
\begin{eqnarray}\label{2.38}
\mathcal{F}^{'}(t)&\geq& \biggr[(1-k)C^{*}C_{Poin}^{m} -
\frac{\epsilon N
(m-1)}{m}\biggr]\mathcal{I}^{-k}(t)\|u_{t}\|^{m}_{L^{\frac{n}{m}}(\mathbb{B})}
+ \epsilon(1 +
\frac{p}{2})\int_{\mathbb{B}}u^{2}_{t}(x,t)\frac{dx_{1}}{x_{1}}dx'
\nonumber\\&+& \epsilon ~p~ \mathcal{I}(t) ~+~ \epsilon~
M_{1}(g~o~\nabla_{\mathbb{B}}u)(t)  ~+~ \epsilon~
M_{2}\|\nabla_{\mathbb{B}}
u\|^{2}_{L^{\frac{n}{2}}(\mathbb{B})}\nonumber\\&+&
\epsilon\biggl[p~\mathcal{I}(t)
~-~\frac{N^{1-m}~C_{f}^{m}}{m}\mathcal{I}^{k(m-1)}(t)\|u\|^{m}_{L^{\frac{n}{m}}(\mathbb{B})}
\biggr] \nonumber\\&=&\biggr[(1-k)C^{*}C_{Poin}^{m} - \frac{\epsilon
N
(m-1)}{m}\biggr]\mathcal{I}^{-k}(t)\|u_{t}\|^{m}_{L^{\frac{n}{m}}(\mathbb{B})}
+ \epsilon(1 +
\frac{p}{2})\int_{\mathbb{B}}u^{2}_{t}(x,t)\frac{dx_{1}}{x_{1}}dx'
\nonumber\\&+& \epsilon\biggl[p~\mathcal{I}(t)  ~-~
\frac{N^{1-m}C_{f}^{m}~C_{emb}}{m}~(\frac{C_{h}}{p})^{k(m-1)}
\biggl\{-2\mathcal{I}(t) -
\|u_{t}\|^{2}_{L^{\frac{n}{2}}(\mathbb{B})} \nonumber\\&-&~
(g~o~\nabla_{\mathbb{B}} u)(t)
~+~\|u\|^{p}_{L^{\frac{n}{p}}(\mathbb{B})}\biggr\}\biggr]\nonumber\\&\geq&
\biggr[(1-k)C^{*}C_{Poin}^{m} - \frac{\epsilon N
(m-1)}{m}\biggr]\mathcal{I}^{-k}(t)\|u_{t}\|^{m}_{L^{\frac{n}{m}}(\mathbb{B})}
+ \epsilon(1 + \frac{p}{2} +
R)\|u_{t}\|^{2}_{L^{\frac{n}{2}}(\mathbb{B})} \nonumber\\&+&
\epsilon (~p~+ 2R) \mathcal{I}(t) ~+~ \epsilon~ (M_{1} +
R)(g~o~\nabla_{\mathbb{B}}u)(t)  \nonumber\\&+&~ \epsilon~
M_{2}\|\nabla_{\mathbb{B}} u\|^{2}_{L^{\frac{n}{2}}(\mathbb{B})} -
\epsilon R\|u\|^{p}_{L^{\frac{n}{p}}(\mathbb{B})},
\end{eqnarray}
where $R :=
\frac{N^{1-m}~C_{f}^{m}~C_{emb}}{m}~(\frac{C_{h}}{p})^{k(m-1)}$ is a
positive constant. We set $M-{3} < \min\{M_{1},M_{2},
\frac{p}{2}\}.$ Moreover, we can get the following estimations for
$p= 2M_{3} + (p - 2M_{3}):$
\begin{eqnarray}\label{2.39}
\mathcal{F}^{'}(t) &\geq& \biggr[(1-k)C^{*}C_{Poin}^{m} -
\frac{\epsilon N
(m-1)}{m}\biggr]\mathcal{I}^{-k}(t)\|u_{t}\|^{m}_{L^{\frac{n}{m}}(\mathbb{B})}
+ \epsilon(1 + \frac{p}{2} + R -
M_{3})\|u_{t}\|^{2}_{L^{\frac{n}{2}}(\mathbb{B})} \nonumber\\&+&
\epsilon (~p~+ 2R -2M_{3}) \mathcal{I}(t) ~+~ \epsilon~ (M_{1} + R -
M_{3})(g~o~\nabla_{\mathbb{B}}u)(t)  \nonumber\\&+&~ \epsilon~
(M_{2} - M_{3})\|\nabla_{\mathbb{B}}
u\|^{2}_{L^{\frac{n}{2}}(\mathbb{B})} - \epsilon (\frac{2M_{3}}{p}
-R)\|u\|^{p}_{L^{\frac{n}{p}}(\mathbb{B})}.
\end{eqnarray}
For large enough $N,$ we take $$\gamma:= \min\biggl\{(1 +
\frac{p}{2} + R - M_{3}),~(~p~+ 2R -2M_{3}),~(M_{1} + R -
M_{3}),(M_{2} - M_{3})\biggr\}.$$ Then,
\begin{eqnarray}\label{2.40}
\mathcal{F}^{'}(t) &\geq& \biggr[(1-k)C^{*}C_{Poin}^{m} -
\frac{\epsilon N
(m-1)}{m}\biggr]\mathcal{I}^{-k}(t)\|u_{t}\|^{m}_{L^{\frac{n}{m}}(\mathbb{B})}\nonumber
\\&+&\epsilon \gamma\biggl[\mathcal{I}(t) +
\|u_{t}\|^{2}_{L^{\frac{n}{2}}(\mathbb{B})} +
\|u\|^{p}_{L^{\frac{n}{p}}(\mathbb{B})}  + (g~o~\nabla_{\mathbb{B}}
u)(t)\biggr].
\end{eqnarray}
On the other hand, sine $\mathcal{I}(t)\geq \mathcal{I}(0) > 0,$ it
follows that $$\mathcal{F}(t) \geq \mathcal{F}(0) > 0~~~~\forall
t\geq 0.$$Therefore,
\begin{eqnarray}\label{2.41}
\mathcal{F}^{'}(t) \geq \epsilon \gamma\biggl[\mathcal{I}(t) +
\|u_{t}\|^{2}_{L^{\frac{n}{2}}(\mathbb{B})} +
\|u\|^{p}_{L^{\frac{n}{p}}(\mathbb{B})}  + (g~o~\nabla_{\mathbb{B}}
u)(t)\biggr].
\end{eqnarray}
Furthermore, by H\"{o}lder's inequality and embedding map one can
get
\begin{eqnarray}\label{2.42}
\biggl|\int_{\mathbb{B}}u~u_{t}\frac{dx_{1}}{x_{1}}dx'\biggr| \leq
\|u\|_{L^{\frac{n}{2}}(\mathbb{B})}~\|u_{t}\|_{L^{\frac{n}{2}}(\mathbb{B})}
\leq
C_{emb}~\|u\|_{L^{\frac{n}{p}}(\mathbb{B})}~\|u_{t}\|_{L^{\frac{n}{2}}(\mathbb{B})}.
\end{eqnarray}
Hence,
\begin{eqnarray}\label{2.43}
\biggl|\int_{\mathbb{B}}u~u_{t}\frac{dx_{1}}{x_{1}}dx'\biggr|^{\frac{1}{1-k}}
\leq
C_{emb}^{\frac{1}{1-k}}~\|u\|^{\frac{1}{1-k}}_{L^{\frac{n}{p}}(\mathbb{B})}~\|u_{t}\|^{\frac{1}{1-k}}_{L^{\frac{n}{2}}(\mathbb{B})}.
\end{eqnarray}
We exploit Young's inequality, thus
\begin{eqnarray}\label{2.44}
\biggl|\int_{\mathbb{B}}u~u_{t}\frac{dx_{1}}{x_{1}}dx'\biggr|^{\frac{1}{1-k}}
\leq
C_{emb}^{\frac{1}{1-k}}\biggl(\frac{D^{\frac{\mu}{1-k}}}{\mu}\|u\|^{\frac{\mu}{1-k}}_{L^{\frac{n}{p}}(\mathbb{B})}
+
\frac{D^{-\frac{\nu}{1-k}}}{\nu}\|u_{t}\|^{\frac{\nu}{1-k}}_{L^{\frac{n}{2}}(\mathbb{B})}\biggr),
\end{eqnarray}
such that $\frac{1}{\mu} + \frac{1}{\nu} = 1.$ In fact, we consider
$\frac{\mu}{1-k} = \frac{2}{(1-2k)} \leq p,$ then $\nu = 2(1-k).$

Therefore,
\begin{eqnarray}\label{2.45}
\biggl|\int_{\mathbb{B}}u~u_{t}\frac{dx_{1}}{x_{1}}dx'\biggr|^{\frac{1}{1-k}}
\leq A\biggr\{\|u\|^{s}_{L^{\frac{n}{p}}(\mathbb{B})} +
\|u_{t}\|^{2}_{L^{\frac{n}{2}}(\mathbb{B})}\biggr\},
\end{eqnarray}
where $s= \frac{2}{(1-2k)} \leq p$ and $A:=
\max\{C_{emb}^{\frac{1}{1-k}}D^{\frac{\mu}{1-k}}
,C_{emb}^{\frac{1}{1-k}} D^{-\frac{\nu}{1-k}}\}.$ Again, we apply
Lemma \ref{l.3},
\begin{eqnarray}\label{2.46}
\biggl|\int_{\mathbb{B}}u~u_{t}\frac{dx_{1}}{x_{1}}dx'\biggr|^{\frac{1}{1-k}}
&\leq&
A\biggl\{(1-\Lambda)\|u_{t}\|^{2}_{L^{\frac{n}{2}}(\mathbb{B})} +
\Lambda\|u\|^{p}_{L^{\frac{n}{p}}(\mathbb{B})} -
\Lambda~(g~o~\nabla_{\mathbb{B}}u)(t) -
2\Lambda\mathcal{I}(t)\biggr\}\nonumber\\&\leq&
G\biggr\{\mathcal{I}(t) +
\|u_{t}\|^{2}_{L^{\frac{n}{2}}(\mathbb{B})} +
(g~o~\nabla_{\mathbb{B}}u)(t) +
\|u\|^{p}_{L^{\frac{n}{p}}(\mathbb{B})}\biggr\},
\end{eqnarray}
Where $G:= \max\{A(1-\Lambda), 2A\Lambda\}$ is a positive constant.
Then, for every $t \geq 0$ one can obtain
\begin{eqnarray}\label{2.47}
\mathcal{F}^{\frac{1}{1-k}}(t) &=& \biggl(\mathcal{I}^{1-k}(t) +
\epsilon\int_{\mathbb{B}}
u_{t}(x,t)~u(x,t)\frac{dx_{1}}{x_{1}}dx'\biggr)^{\frac{1}{1-k}}\nonumber\\&\leq&
2^{\frac{1}{1-k}}\biggl(\mathcal{I}(t) +
\biggl|\int_{\mathbb{B}}u~u_{t}\frac{dx_{1}}{x_{1}}dx'\biggr|^{\frac{1}{1-k}}\biggr)
\nonumber\\&\leq& 2^{\frac{1}{1-k}}\Biggl(\mathcal{I}(t) +
G\biggr\{\mathcal{I}(t) +
\|u_{t}\|^{2}_{L^{\frac{n}{2}}(\mathbb{B})} +
(g~o~\nabla_{\mathbb{B}}u)(t) +
\|u\|^{p}_{L^{\frac{n}{p}}(\mathbb{B})}\biggr\}\Biggr)\nonumber\\&\leq&
\Gamma \biggr\{\mathcal{I}(t) +
\|u_{t}\|^{2}_{L^{\frac{n}{2}}(\mathbb{B})} +
(g~o~\nabla_{\mathbb{B}}u)(t) +
\|u\|^{p}_{L^{\frac{n}{p}}(\mathbb{B})}\biggr\},
\end{eqnarray}
where $\Gamma = \max\{2^{\frac{1}{1-k}} , 2^{\frac{1}{1-k}}G\}$ is a
positive constant. One can use relations \ref{2.41} and \ref{2.47}
and then obtains for any $t \geq 0 $ that
\begin{eqnarray}\label{2.48}
\mathcal{F}^{'}(t) \geq \Omega~\mathcal{F}^{\frac{1}{1-k}}(t),
\end{eqnarray}
where $\Omega$ is a positive constan depending only $\epsilon\gamma$
and $\Gamma.$ Now, we take integral from \ref{2.48} over interval
$(0 , t)$ and get
\begin{eqnarray}\label{2.49}
\mathcal{F}^{\frac{1}{1-k}}(t) \geq
\frac{1}{\mathcal{F}^{-\frac{k}{1-k}}(0) -  \Omega t \frac{k}{1-k}}.
\end{eqnarray} Hence, it follows from \ref{2.49} that
$\mathcal{F}(t)$ blows up in time
\begin{eqnarray}\label{2.50}
T(k,\Omega) = T \leq \frac{1-k}{\Omega k
(\mathcal{F}(0))^\frac{k}{1-k}}.
\end{eqnarray}
\end{proof}

 Here, we investigate the lower bound of the blow up time for the
blow up solution of problem \ref{1.1}.

{\bf Proof of Theorem \ref{th.2}}
\begin{proof}
First, we define $\mathcal{H}(t) =
\int_{\mathbb{B}}h(x)|u(x,t)|^{p}\frac{dx_{1}}{x_{1}}dx'.$ Then,
\begin{eqnarray}\label{2.51}
\mathcal{H}^{'}(t) &=& p
\int_{\mathbb{B}}|u(x,t)|^{p-2}~u(x,t)u_{t}(x,t)\frac{dx_{1}}{x_{1}}dx'
\nonumber\\&\leq&
\frac{p~C_{h}}{2}\Bigg(\int_{\mathbb{B}}|u(x,t)|^{2(p-1)}\frac{dx_{1}}{x_{1}}dx'
+  \int_{\mathbb{B}}|u_{t}(x,t)|^{2}\frac{dx_{1}}{x_{1}}dx'\Biggr)
\end{eqnarray}
To estimate the first term on the right side of inequality
\ref{2.51}, we consider the following two cases. In the first case,
we consider $2 < p \leq 2^{*}.$ Suppose that $q = 2(p-1)$ and $r =
n(p-2).$ Using of H\"{o}lder's inequality and embeding Theorem, one
can get
\begin{equation*}
\int_{\mathbb{B}}|u(x,t)|^{q}\frac{dx_{1}}{x_{1}}dx' =
\int_{\mathbb{B}}|u|^{q\eta}~|u|^{q(1-\eta)}\frac{dx_{1}}{x_{1}}dx'
\leq\Biggl(\int_{\mathbb{B}}|u|^{r}\frac{dx_{1}}{x_{1}}dx'\Biggr)^{\frac{q\eta}{r}}\Biggl(\int_{\mathbb{B}}|u|^{2^{*}}\frac{d
x_{1}}{x_{1}}dx'\Biggr)^{\frac{q(1-\eta)}{2^{*}}},
\end{equation*}
where $\eta$ satisfies $\frac{q\eta}{r} + \frac{q(1-\eta)}{2^{*}} =
1.$ A simple calculation shows that
$$\eta = \frac{r(2^{*} - q)}{q(2^{*} - r)},\hspace{1 cm} \frac{q\eta}{r} = \frac{2}{n}~,\hspace{1 cm}
\frac{q - q\eta}{2^{*}} = 1 - \frac{2}{n}.$$ Then, we use the
H\"{o}lder inequality
\[\|u\|^{\frac{2r}{n}}_{L^{\frac{r}{n}}(\mathbb{B})} \leq |\mathbb{B}^{\frac{2(p - r)}{np}}\|u\|
^{\frac{2r}{n}}_{L^{\frac{n}{p}}(\mathbb{B})}\leq \bigg(1 +
|\mathbb{B}^{\frac{2(p - r)}{np}}\biggr)\|u\|
^{\frac{2r}{n}}_{L^{\frac{n}{p}}(\mathbb{B})}\] and embedding
inequality $\|u\|_{L^{\frac{n}{2^{*}}}(\mathbb{B})} \leq
\mathcal{C}_{*}\|\nabla_{\mathbb{B}}u\|_{L^{\frac{n}{2}}(\mathbb{B})},$
where $\mathcal{C}_{*}$ is the best constant of the Sobolev
embedding
$\mathcal{H}^{1,\frac{n}{2}}_{2,0}(\mathbb{B})\hookrightarrow
L^{\frac{n}{2^{*}}}(\mathbb{B}).$
\begin{eqnarray}\label{2.52}
\|u\|^{q}_{L^{\frac{n}{q}}(\mathbb{B})}
&\leq&\|u\|^{q\eta}_{L^{\frac{n}{r}}(\mathbb{B})}~\|u\|^{q(1-\eta)}_{L^{\frac{n}{2^{*}}}(\mathbb{B})}=
\|u\|^{\frac{2r}{n}}_{L^{\frac{n}{r}}(\mathbb{B})}\|u\|^{2}_{L^{\frac{n}{2^{*}}}(\mathbb{B})}\nonumber
\\&\leq& \mathcal{C}_{*}^{2}\biggl(1 + |\mathbb{B}|^{\frac{2(p-
r)}{np}}\biggr)~\|u\|^{\frac{2r}{n}}_{L^{\frac{n}{p}}(\mathbb{B})}~\|\nabla_{\mathbb{B}}u\|^{2}_{L^{\frac{n}{2}}(\mathbb{B})}\nonumber
\\&\leq& \mathcal{C}_{*}^{2}\biggl(1 + |\mathbb{B}|^{\frac{2(p-
r)}{np}}\biggr)~\biggl[\|u\|^{\frac{2r
.s}{n}}_{L^{\frac{n}{p}}(\mathbb{B})}~+~\|\nabla_{\mathbb{B}}u\|^{2t}_{L^{\frac{n}{2}}(\mathbb{B})}\biggr]
\nonumber\\&\leq&
\mathcal{C}_{1}\Biggl(\|u\|^{p}_{L^{\frac{n}{p}}(\mathbb{B})} +
\|\nabla_{\mathbb{B}}u\|^{2}_{L^{\frac{n}{2}}(\mathbb{B})}\Biggr)^{k_{1}},
\end{eqnarray}
where $\frac{1}{s} + \frac{1}{t} = 1,$ $t:= \frac{2r}{np}.s$ and we
can deduce $k_{1}= \frac{3p -4}{p}$ and $\mathcal{C}_{1}=
\mathcal{C}_{*}^{2}\biggl(1 + |\mathbb{B}|^{\frac{2(p-
r)}{np}}\biggr).$

For the second case, we assume that $\frac{2n}{n-2} < p \leq
\frac{2(n-1)}{n-2}.$ Then,

\begin{eqnarray}\label{2.53}
\|u\|^{q}_{L^{\frac{n}{r}}(\mathbb{B})} &\leq&
\mathcal{C}_{*}^{r}\biggl(1 + |\mathbb{B}|^{1 -
\frac{q}{2^{*}}}\biggr)\|\nabla_{\mathbb{B}}u\|^{q}_{L^{\frac{n}{2}}(\mathbb{B})}\leq
\mathcal{C}_{*}^{r}\biggl(1 + |\mathbb{B}|^{1 -
\frac{q}{2^{*}}}\biggr)\Biggl[
\|\nabla_{\mathbb{B}}u\|^{q}_{L^{\frac{n}{2}}(\mathbb{B})} +
\|u\|^{p(p-1)}_{L^{\frac{n}{p}}(\mathbb{B})}\Biggr]\nonumber\\&\leq&
\mathcal{C}_{2}\Biggl(\|u\|^{p}_{L^{\frac{n}{p}}(\mathbb{B})} +
\|\nabla_{\mathbb{B}}u\|^{2}_{L^{\frac{n}{2}}(\mathbb{B})}\Biggr)^{k_{2}},
\end{eqnarray}
where $k_{2}= p-1,$ and $\mathcal{C}_{2} =
\mathcal{C}_{*}^{r}\biggl(1 + |\mathbb{B}|^{1 -
\frac{q}{2^{*}}}\biggr)$ is a positive constant.

From Lemma \ref{l.1}
\begin{eqnarray}\label{2.54}
I(t) \leq I(0)  &=& \frac{1}{2}
\|u_{1}\|^{2}_{L^{\frac{n}{2}}(\mathbb{B})} +
\|\nabla_{\mathbb{B}}u_{0}\|_{L^{\frac{n}{2}}(\mathbb{B})}^{2}
\nonumber\\&-& \frac{1}{p} \int_{\mathbb{B}}
h(x)|u|^{p}\frac{dx_{1}}{x_{1}}dx' \hspace{1 cm} \forall t\in [0~ ,~
T(k,\Omega)= T~),
\end{eqnarray}
where $k$ and $\Omega$ are given in proof of Theorem \ref{th.1}.
Now, using of the definitopn of the functional $I(t),$ assumption
$A_{3}$ and inequality \ref{2.53} we obtain
\begin{eqnarray}\label{2.55}
\|u_{t}(x,t)\|^{2}_{L^{\frac{n}{2}}(\mathbb{B})} &+&
\frac{1}{(p-1)^{2}}\|\nabla_{\mathbb{B}}
u\|^{2}_{L^{\frac{n}{2}}(\mathbb{B})}~ +~ (g~o~u)(t)
\nonumber\\&\leq&\|u_{t}(x,t)\|^{2}_{L^{\frac{n}{2}}(\mathbb{B})} +
\Bigg(1 - \int_{0}^{t} g(s)ds\Biggr)\|\nabla_{\mathbb{B}}
u\|^{2}_{L^{\frac{n}{2}}(\mathbb{B})}~ +~
(g~o~u)(t)\nonumber\\&=&\frac{2}{p}\|u_{t}\|^{p}_{L^{\frac{n}{p}}(\mathbb{B})}
+ 2I(t) \leq \frac{2}{p} \mathcal{H}(t) +  2I(0).
\end{eqnarray}
From computaions \ref{2.51}-\ref{2.55}, we obtain the following
inequality:
\begin{eqnarray}\label{2.56}
\mathcal{H}^{'}(t) &\leq&
\frac{C_{h}p}{2}\Biggl(\mathcal{C}_{i}\biggl[\|u\|^{p}_{L^{\frac{n}{p}}(\mathbb{B})}
+ \|\nabla_{\mathbb{B}}u\|^{2}_{L^{\frac{n}{2}}(\mathbb{B})}\biggr]
+~ \frac{2}{p}\mathcal{H}(t) + 2I(0)\Biggr)\nonumber\\&\leq&
\frac{C_{h}p}{2}\Biggl(\mathcal{C}_{i}\biggl[\mathcal{H}(t) +
\mathcal{C}_{0}\biggl(\frac{2}{p}\mathcal{H}(t) +
2I(0)\biggr)\biggr]^{k_{i}} + \frac{2}{p}\mathcal{H}(t) +
2I(0)\Biggr)\nonumber\\&\leq&
\frac{C_{h}p}{2}\Biggl(\mathcal{C}_{i}\biggl[\biggr(1 +
\frac{2\mathcal{C}_{0}}{p}\biggr)\mathcal{H}(t) + 2\mathcal{C}_{0}
I(0)\bigg]^{k_{i}} + \frac{2}{p}\mathcal{H}(t) +
2I(0)\Biggr)\nonumber\\&\leq&
\frac{C_{h}p}{2}~2^{k_{i}-1}\Biggl(\biggl[1 +
\frac{2\mathcal{C}_{0}}{p}\biggr]^{k_{i}}\mathcal{H}^{k_{i}}(t) +
\biggl(2\mathcal{C}_{0}I(0)\biggr)^{k_{i}}\Biggr) + \mathcal{H}(t) +
pI(0) \nonumber\\&=& \mathcal{C}_{3}\mathcal{H}^{k_{i}}(t) +
\mathcal{H}(t) + \mathcal{C}_{4},
\end{eqnarray}
where $\mathcal{C}_{0}= \frac{1}{ l},$ $\mathcal{C}_{3}=
\frac{C_{h}p\mathcal{C}_{i}2^{k_{i}}}{4}\biggl(1 +
\frac{2\mathcal{C}_{0}}{p}\biggr)^{k_{i}}$ and $\mathcal{C}_{4} =
pI(0) +
\frac{C_{h}p\mathcal{C}_{i}}{4}\biggl(4\mathcal{C}_{0}I(0)\biggr)^{k_{i}}$
for $i=1,2$ are positive constants. We exploit  Theorem \ref{th.1}
and then get
\begin{eqnarray}\label{2.57}
\lim\limits_{t\rightarrow T}\int_{\mathbb{B}}
h(x)|u(x,t)|^{p}\frac{dx_{1}}{x_{1}}dx' = + \infty.
\end{eqnarray}
It follows from \ref{2.56} and \ref{2.57},
\begin{eqnarray}\label{2.58}
\int_{\mathcal{H}(0)}^{\infty} \frac{1}{\mathcal{C}_{3}S^{k} + S +
\mathcal{C}_{4}}ds \leq T.
\end{eqnarray}
\end{proof}

\end{document}